\newcommand{\beq}{\begin{equation}}
	\newcommand{\eeq}{\end{equation}}
\newcommand{\ben}{\begin{eqnarray}}
	\newcommand{\een}{\end{eqnarray}}
\newcommand{\beno}{\begin{eqnarray*}}
	\newcommand{\eeno}{\end{eqnarray*}}
\numberwithin{equation}{section} 
\newtheorem{Thm}{Theorem}[section] 
\newtheorem{Lem}[Thm]{Lemma} 
\newtheorem{Cor}[Thm]{Corollary}
\newtheorem{Rem}[Thm]{Remark}
\newtheorem{Def}[Thm]{Definition}
\numberwithin{equation}{section}
\newcommand{\no}{\nonumber}
\newcommand{\m}{\mkern-10mu}
\begin{document}
	\title[Liouville type theorems for the fractional Navier-Stokes equations]{\bf Liouville type theorems for the fractional Navier-Stokes equations without the integrability condition of velocity in $\mathbb{R}^3$}
	
	\author{Wendong~Wang}
	\address[Wendong~Wang]{School of Mathematical Sciences, Dalian University of Technology, Dalian, 116024,  China}
	\email{wendong@dlut.edu.cn}
	
	\author{Guoxu~Yang*} \thanks{*Author to whom any correspondence should be addressed.}
	\address[Guoxu~Yang]{School of Mathematical Sciences, Dalian University of Technology, Dalian, 116024,  China}
	\email{guoxu\_dlut@outlook.com}

	\author{Jianbo~Yu}
	\address[Jianbo~Yu]{School of Mathematical Sciences, Dalian University of Technology, Dalian, 116024,  China}
	\email{yujb@mail.dlut.edu.cn}
	
	\date{\today}


	\begin{abstract}
Motivated by the classification of solutions of harmonic functions,
 we investigate Liouville type theorems for the fractional Navier-Stokes equations  in $\mathbb{R}^3$ under some conditions on the boundedness of fractional derivatives. We prove that the smooth solution must be a trivial solution provided that it uniformly converges to a nonzero constant vector at infinity by applying Lizorkin's multiplier theorem to establish \(L^p\) estimates for the fractional linear Oseen system and Coifman-McIntosh-Meyer type commutator estimates for the dissipation term. It is noteworthy that the integrability of velocity is not required here. 
	\end{abstract}
	
	\maketitle
	
	{\small {\bf Keywords:}  Liouville type theorem;	fractional Navier-Stokes equations; Lizorkin's multiplier theorem; Coifman-McIntosh-Meyer estimates}

	
	
	\section{Introduction}
    Consider the following steady-state fractional Navier-Stokes equations in $\mathbb{R}^3$ :
	\begin{align} \label{eq:main}
	\left\{\begin{array}{l}
		-(-\Delta)^s u=u \cdot \nabla u+\nabla p, \\
		\operatorname{div} u=0,
	\end{array}\right.
    \end{align}
   where $u=\left(u_1(x), u_2(x), u_3(x)\right)$ is a vector field and $p=p(x)$ is a scalar field. As in \cite{AVMRTM10, Kw2017}, the fractional Laplacian of a function $f: \mathbb{R}^N \rightarrow \mathbb{R}$ is expressed by the formula
	$$
	(-\Delta)^{s} f(x):=C_{N, s} \int_{\mathbb{R}^N} \frac{f(x)-f(y)}{|x-y|^{N+2s}} d y,
	$$
	where the parameter $s$ is a real number with $0<s < 1$ and $C_{N, s}$ is a normalization constant which is given by
	$$
	C_{N, s}=\frac{\Gamma\left(\frac{N+2s}{2}\right)}{2^{-2s} \pi^{\frac{N}{2}}\left|\Gamma\left(-s\right)\right|}.
	$$
	The solutions to the fractional diffusion equations are L\'{e}vy motions, a generalization of Brownian motion using $s/2$-stable distributions, and they are scaling limits of random walks with power-law transition probabilities, and their sample paths are random fractals whose dimension $s/2$ coincides with the order of the fractional derivative (see  \cite{ST1994}). Experimentally, the fractional Laplacian operator $(-\Delta)^{s}$ has proven effective in simulating non-classical reaction-diffusion dynamics within porous media systems (see \cite{MDB1999, MDB2001}) and in computational turbulence models (see Chapter 13.2 in \cite{P2000}).

%

    We are interested in the classification of solutions of the system \eqref{eq:main}. A classical result asserts that if the $k$-th gradient of a harmonic function is bounded, then it is a $k$-th polynomial; see also the work of Yau \cite{Yau} and Li-Tam \cite{Li-Tam}, where they considered the space of harmonic functions on complete manifold with
    nonnegative Ricci curvature with linear growth. Hence, a natural question is 
    \begin{quote}
    \centering{\it Does  $u\equiv C$ holds if $u$ is bounded in $\mathbb{R}^3$?}
    \end{quote}
    Even for $s=1$, i.e., the steady Navier-Stokes equations
\begin{align} \label{eq:NS}
	\left\{\begin{array}{l}
		-\Delta u+u \cdot \nabla u+\nabla p=0, \\
		\operatorname{div} u=0,
	\end{array}\right.
\end{align}
this is not solved. As said by Koch-Nadirashvili-Seregin-Sverak in \cite{KNSS},  
``The case of general 3-dimensional fields is, as far as we know, completely open. In fact, it is open even in the steady-state case ($u$ is independent of $t$)." For the two dimensional case, bounded solutions for the steady Navier-Stokes equations are constants proved by Gilbarg-Weinberger \cite{GW1978}, where they made use of the fact that the vorticity function satisfies
a nice elliptic equation to which the maximum principle applies; see also the same result in \cite{KNSS} as a byproduct of their
work on the non-stationary case. For more references, we refer to  \cite{BFZ2013} for the unbounded velocity, \cite{WW2023} for the classification of solutions in cone and the references therein.
For the three dimensional case, it is also very difficult, even if we assume the Dirichlet integral \(\int_{\mathbb{R}^3} |\nabla u|^2 \, dx\) is finite, which is so-called D-solution, and $u$ is vanishing at infinity.  This interesting question originates from Leray's seminal work \cite{L1933} and is explicitly discussed in Galdi's book (Remark X.9.4, \cite{Galdi}; see also Conjecture 2.5 in Tsai's book \cite{Tai2018}). The Liouville type problem, without any additional assumptions, remains widely open. Galdi proved a Liouville type theorem in \cite{Galdi} under the condition \(u \in L^{\frac{9}{2}}(\mathbb{R}^3)\). Chae \cite{Chae2014} showed that the condition \(\Delta u \in L^{\frac{6}{5}}(\mathbb{R}^3)\) is sufficient for \(u\) to vanish by using the maximum principle for the head pressure.  Seregin further established the criterion \(u \in BMO^{-1}(\mathbb{R}^3)\) in \cite{Se2016}. For more results on conditional Liouville properties, we refer to \cite{CW2016, Chae2021,CW2019,KTW2021,SW2019} and the references therein.

When we do not assume $u$ is vanishing at infinity,
Finn \cite{Fin} and Ladyzhenskaya (See Theorem 8, Chapter 5 in \cite{Lady}) showed that any D-solution of Navier-Stokes equations \eqref{eq:NS} in a 3D exterior domain converges to a prescribed constant vector $u_{\infty}$ at infinity (the derivatives of the velocity and the pressure also converge to the constant in 3D, for example seeing Theorem X.5.1 in \cite{Galdi}), i.e.,
\begin{eqnarray*} \label{eq:the uniform condition}
	u(x) \rightarrow u_\infty \quad \text { uniformly as } \quad|x| \rightarrow \infty.
\end{eqnarray*}
When $ u_\infty \neq 0$, one could obtain $u\in L^3(\mathbb{R}^3)$ by using the  multiplier theorem by Lizorkin (see \cite{Liz1963}, \cite{Liz1967} or Chapter VII.4 in \cite{Galdi}), which implies that $u\equiv C$.
 Li-Pan \cite{LP} proved two forms of Liouville theorems for D-solutions of the MHD equations when one of  the magnetic field or  the velocity field  tends to a non-zero constant vector at infinity while the other one tends to zero. See also recent results with different viscosity coefficients by Wang-Yang in \cite{WY2024}.

	However, Liouville-type problems for the fractional case (\(s \neq 1\)) have been significantly less explored. Under some mild assumptions over the external force, there exists at
least one solution $u \in \dot{H}^s\left(\mathbb{R}^3\right) $  obtained by Chamorro-Poggi in \cite{CP2025}. Tang-Yu \cite{TY2016} studied partial H\"{o}lder regularity of the steady fractional Navier-Stokes equations with
$\frac12< s < 1$, and proved that if $\frac12 < s < \frac56$ , any steady suitable weak solution is regular away from a
relatively closed set with zero $(5-6s)$-Hausdorff measure and it is regular when $ \frac56\leq s <1$. 
In \cite{WX2018}, Wang-Xiao proved that smooth solutions to \eqref{eq:main} vanish identically when \(u \in \dot{H}^s\left(\mathbb{R}^3\right) \cap L^{\frac{9}{2}}\left(\mathbb{R}^3\right)\) for \(0<s <1\) by Caffarelli-Silvestre’s extension \cite{CS07,CdLM20,CH21}, and later Yang \cite{Yang2022} extended this result for the range \(\frac{5}{6} \leq s<1\), proving that smooth solutions are trivial under the weaker condition \(u \in L^{\frac{9}{2}}\left(\mathbb{R}^3\right)\). We also refer to the recent $L^p$-theory results by Jarrin-Vergara-Hermosilla in \cite{JV2024} and  the corresponding result of \cite{Yang2022} in high dimension  by Liu-Zuo \cite{LZ2025}. 
On the other hand, for \(\frac{3}{10} < s < 1\), the authors of \cite{CP2025} addressed the Liouville type problem under the condition \(u \in \dot{H}^s\left(\mathbb{R}^3\right) \cap L^{p(s)}\left(\mathbb{R}^3\right)\), where the parameter \(p(s)\) depends on \(s\) and approaches the critical value \(\frac{6}{3-2s}\) in a certain sense. 
	More recently, by establishing a new formula for the natural energy $\int_{\mathbb{R}^3} |(-\Delta)^{\frac{s}{2}}u|^2 dx$ of weak solutions, Tan in \cite{Tan2025} obtained the Liouville type theorem for $\frac12<s<1$ by assuming $u\in  \dot{H}^s\left(\mathbb{R}^3\right) \cap \dot{B}^{1-2s}_{\infty,\infty}$. 

It is still unknown that whether the integrability condition of $u$ like $L^{\frac{9}{2}}\left(\mathbb{R}^3\right)$ or  $\dot{B}^{1-2s}_{\infty,\infty}$ could be removed. In this paper we will investigate this issue. When quipped with the uniform condition at spatial infinity:
\begin{eqnarray} \label{eq:the uniform condition at spatial infinit}
	u(x) \rightarrow u_\infty=(1,0,0)^T \quad \text { uniformly as } \quad|x| \rightarrow \infty,
\end{eqnarray}
one of our main results is stated as follows. 
\begin{Thm} \label{thm:main1}
		Let $\frac12<s<1$. Suppose that $u \in \dot{H}^s\left(\mathbb{R}^3\right) \cap \dot{W}^{1+2s,\infty}\left(\mathbb{R}^3\right)$ is a smooth solution of \eqref{eq:main}  with the uniform condition \eqref{eq:the uniform condition at spatial infinit}, then $u\equiv u_\infty$.
	\end{Thm}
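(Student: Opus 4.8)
The plan is to linearize \eqref{eq:main} about the nonzero limit $u_\infty=(1,0,0)^T$ and to treat the resulting perturbation by the anisotropic harmonic analysis characteristic of the Oseen problem. Writing $v=u-u_\infty$ and using that $u_\infty$ is constant, so that $(-\Delta)^s u_\infty=0$ and $\na u_\infty=0$, the system \eqref{eq:main} becomes the fractional Oseen system
\[
-(-\Delta)^s v-\p_1 v-\na p=v\cdot\na v=\div(v\otimes v),\qquad \div v=0,
\]
where the last identity uses $\div v=0$. The uniform condition \eqref{eq:the uniform condition at spatial infinit} becomes $v(x)\to 0$ as $|x|\to\infty$, and the hypothesis $u\in\dot H^s\cap\dot W^{1+2s,\infty}$ guarantees that $v$ together with its fractional derivatives up to order $1+2s$ are bounded, which is what will license the integrations by parts below.

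First I would solve the linear fractional Oseen system in Fourier variables. Taking the divergence to eliminate the pressure gives $\widehat p=i\xi\cdot\widehat f/|\xi|^2$ with $f=\div(v\otimes v)$, and substituting back yields
\[
\widehat v_j(\xi)=\frac{-1}{|\xi|^{2s}+i\xi_1}\Bigl(\delta_{jk}-\frac{\xi_j\xi_k}{|\xi|^2}\Bigr)\widehat f_k(\xi),
\]
so that $v$ is recovered from the tensor $v\otimes v$ through the multiplier $m_{jkl}(\xi)=-i\xi_l(|\xi|^{2s}+i\xi_1)^{-1}(\delta_{jk}-\xi_j\xi_k|\xi|^{-2})$. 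Because $2s>1$, the symbol $|\xi|^{2s}+i\xi_1$ is dominated by $|\xi|^{2s}$ at high frequencies and by $i\xi_1$ at low frequencies, so $m$ is genuinely anisotropic and is not an ordinary H\"ormander-Mikhlin multiplier; this is exactly the regime addressed by Lizorkin's multiplier theorem. I would verify the Lizorkin mixed-derivative bounds $|\xi^{\alpha}\p^{\alpha}m(\xi)|\le M$ for $\alpha\in\{0,1\}^3$, and thereby obtain anisotropic $L^p$ estimates for the Oseen solution operator, the smoothing being governed by the negative homogeneity $1-2s$ of $m$ at high frequency.

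The nonlinearity and the dissipation are then handled by a bootstrap. Starting from $v\in L^\infty$ together with $v\in\dot H^s\hookrightarrow L^{6/(3-2s)}$, interpolation gives $v\in L^q$ for all $q\ge 6/(3-2s)$, hence $v\otimes v$ lies in the corresponding range of Lebesgue spaces; feeding this into the Lizorkin estimate improves the integrability of $v$, and iterating drives the exponent down to $v\in L^3(\R^3)$. Since $(-\Delta)^s$ obeys no exact Leibniz rule, the terms arising when fractional derivatives meet the product $v\otimes v$ (and, later, the localization) must be estimated by Coifman-McIntosh-Meyer type commutator bounds, which supply the requisite $L^p$ control of $[(-\Delta)^{s/2},\cdot]$ acting on $v$.

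Once $v\in L^3(\R^3)\cap\dot H^s(\R^3)$ is established, I would close the argument by a Caccioppoli/energy identity: testing the Oseen system against $v\varphi_R^2$ for a cutoff $\varphi_R$ and letting $R\to\infty$, the pressure term drops by $\div v=0$, the drift term $\int\p_1 v\cdot v=\tfrac12\int\p_1|v|^2$ vanishes after integration by parts, and the convective term $\int\div(v\otimes v)\cdot v$ vanishes as well since $\div v=0$, leaving $\|(-\Delta)^{s/2}v\|_{L^2}^2=0$. The integrability $v\in L^3$ is precisely what forces the boundary and commutator error terms produced by the localization to vanish in the limit. Hence $(-\Delta)^{s/2}v\equiv 0$, so $v$ is constant, and since $v\to 0$ at infinity we conclude $v\equiv 0$, i.e. $u\equiv u_\infty$. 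The step I expect to be the main obstacle is the Fourier analysis of the Oseen multiplier: verifying the Lizorkin conditions for $m$ is delicate because of the competition between $|\xi|^{2s}$ and $i\xi_1$ near the origin together with the non-integer power $|\xi|^{2s}$, and it is here that the precise range $\tfrac12<s<1$ should enter.
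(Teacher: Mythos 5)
Your overall architecture --- pass to $v=u-u_\infty$, obtain the fractional Oseen system, use Lizorkin's theorem (Theorem \ref{lem:lizorkin}) to upgrade integrability to $v\in L^3$, then kill $\|(-\Delta)^{s/2}v\|_{L^2}$ with a cut-off energy identity --- is the same as the paper's, and your Fourier formula and final Caccioppoli step match Lemma \ref{lem:3D stationary linear generalized degenerate Ossen system} and Step III of the proof of Theorem \ref{thm:main}. The genuine gap is in your bootstrap, which is precisely the step the paper organizes differently. Because you put the derivative of the nonlinearity on the multiplier side, your symbol is $m_{jkl}(\xi)=-i\xi_l(|\xi|^{2s}+i\xi_1)^{-1}\bigl(\delta_{jk}-\xi_j\xi_k|\xi|^{-2}\bigr)$, which at high frequencies behaves like $|\xi|^{1-2s}$. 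Testing \eqref{eq:bound of phi} with $\kappa=0$ and $|\xi_1|=|\xi_2|=|\xi_3|=N\to\infty$ forces $3\beta\le 2s-1$, and this cap is intrinsic rather than an artifact of Lizorkin's theorem: at high frequency the operator is a fractional integral of order $2s-1$, so a scaling argument shows no $L^k\to L^r$ bound can exceed the gain $\frac1k-\frac1r=\frac{2s-1}{3}$. Your starting point is $v\in L^{6/(3-2s)}\cap L^\infty$, hence $v\otimes v\in L^m$ for all $m\ge \frac{3}{3-2s}$, and one application of the estimate gives $v\in L^r$ with $\frac1r=\frac{3-2s}{3}-\frac{2s-1}{3}=\frac{4-4s}{3}$. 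This improves on $\frac{1}{q_0}=\frac{3-2s}{6}$ if and only if $s<\frac56$; for $s\in[\frac56,1)$ the iteration $\frac{1}{q_{n+1}}=\frac{2}{q_n}-\frac{2s-1}{3}$ stalls at or below the starting exponent (at $s=\frac56$ it is frozen exactly at $L^{9/2}$), so your bootstrap never reaches $L^3$ and the proof fails on part of the claimed range $\frac12<s<1$.

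The tell-tale symptom is that you never use the hypothesis $u\in\dot W^{1+2s,\infty}$ quantitatively (only to ``license integrations by parts''); the Remark following Theorem \ref{thm:main} warns that some integrability of $\nabla u$ appears hard to dispense with. The paper first reduces Theorem \ref{thm:main1} to Theorem \ref{thm:main} via fractional Gagliardo-Nirenberg (Lemma \ref{lem:FGN}), $\|\nabla u\|_{L^{1+1/s}}\le C\|u\|_{\dot H^s}^{2s/(s+1)}\|u\|_{\dot W^{1+2s,\infty}}^{(1-s)/(1+s)}$, and then, crucially, does \emph{not} write the nonlinearity in divergence form: it keeps $v\cdot\nabla v=\mathcal{M}v$ with $\mathcal{M}$ built from $\nabla v$, cuts off outside a large ball $B_M$ so that $\|\mathcal{M}\chi_{B_M^c}\|_{L^{(s+1)/s}}<\varepsilon$ by \eqref{eq:D_condition}, controls the commutator $(-\Delta)^s(\psi v)-\psi(-\Delta)^s v$ by the Coifman-McIntosh-Meyer estimates (Lemma \ref{thm2.2}), and inverts the perturbed Oseen operator by Kato's stability theorem (Lemmas \ref{lem2.5} and \ref{lem:WangYang}). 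The point is that H\"older gives $\|\mathcal{M}w\|_{L^k}\le\|\mathcal{M}\|_{L^{(s+1)/s}}\|w\|_{L^r}$ with $\frac1k=\frac{s}{s+1}+\frac1r$, which exactly matches the Lizorkin gain $\frac{s}{s+1}$ of the derivative-free symbol $(|\xi|^{2s}+i\xi_1)^{-1}$, so the perturbation closes for every $s\in(\frac12,1)$, and choosing $k=\frac{3(s+1)}{4s+1}$ lands on $r=3$ in one step. Incidentally, on the subrange $\frac12<s<\frac56$ your scheme (once $v$ is rigorously identified with its multiplier representation) would yield $v\in L^3$ using only $u\in\dot H^s\cap L^\infty$, which is stronger than Theorem \ref{thm:main1} there; but to cover $s\ge\frac56$ you would have to re-introduce the integrability of $\nabla v$ exactly as the paper does.
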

It follows from Theorem \ref{thm:main1} immediately that
	\begin{Cor} \label{cor}
		Let $\frac12<s<1$. Suppose that $u \in \dot{H}^s\left(\mathbb{R}^3\right) \cap \dot{W}^{3,\infty}\left(\mathbb{R}^3\right)$ is a smooth solution of \eqref{eq:main}  with the uniform condition \eqref{eq:the uniform condition at spatial infinit}, then $u\equiv u_\infty$.
	\end{Cor}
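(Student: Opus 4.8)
The plan is to derive Corollary \ref{cor} directly from Theorem \ref{thm:main1} by checking that its hypotheses are strictly stronger. The assumption $u\in\dot{H}^s(\mathbb{R}^3)$ and the uniform condition \eqref{eq:the uniform condition at spatial infinit} appear verbatim in both statements, so the sole task is to verify the embedding
\[
L^\infty(\mathbb{R}^3)\cap\dot{W}^{3,\infty}(\mathbb{R}^3)\hookrightarrow\dot{W}^{1+2s,\infty}(\mathbb{R}^3),\qquad \tfrac12<s<1,
\]
and to note that \eqref{eq:the uniform condition at spatial infinit} already forces $u\in L^\infty(\mathbb{R}^3)$.

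First I would record that $u$ is bounded. By \eqref{eq:the uniform condition at spatial infinit} there exists $R>0$ such that $|u(x)-u_\infty|\le1$ whenever $|x|\ge R$, while on the compact set $\{|x|\le R\}$ the smooth field $u$ is continuous, hence bounded; therefore $\|u\|_{L^\infty(\mathbb{R}^3)}<\infty$. Thus the corollary's hypothesis upgrades to $u\in L^\infty\cap\dot{H}^s\cap\dot{W}^{3,\infty}$, and it remains only to interpolate the intermediate regularity $1+2s$ between the endpoints $0$ and $3$.

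For the embedding I would use a homogeneous Littlewood--Paley decomposition $u=\sum_{j\in\mathbb{Z}}\dot\Delta_j u$ together with the characterization of $\dot{W}^{1+2s,\infty}$ as the Besov--Hölder--Zygmund space $\dot{B}^{1+2s}_{\infty,\infty}$, valid since $1+2s\in(2,3)$ is non-integer for every $s\in(\tfrac12,1)$. Splitting the supremum by frequency, for $j\ge0$ the Bernstein inequality gives $2^{j(1+2s)}\|\dot\Delta_j u\|_{L^\infty}\lesssim 2^{j(1+2s-3)}\|u\|_{\dot{W}^{3,\infty}}$, which is bounded because $1+2s-3<0$; for $j<0$ one has $2^{j(1+2s)}\|\dot\Delta_j u\|_{L^\infty}\lesssim 2^{j(1+2s)}\|u\|_{L^\infty}\le\|u\|_{L^\infty}$ since $1+2s>0$. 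Taking the supremum over $j$ yields $\|u\|_{\dot{W}^{1+2s,\infty}}\lesssim\|u\|_{L^\infty}+\|u\|_{\dot{W}^{3,\infty}}<\infty$. Equivalently, the Gagliardo--Nirenberg inequality $\|\nabla^2 u\|_{L^\infty}\lesssim\|u\|_{L^\infty}^{1/3}\|\nabla^3 u\|_{L^\infty}^{2/3}$ shows that $\nabla^2 u$ is simultaneously bounded and Lipschitz, hence lies in the homogeneous Hölder space $\dot{C}^{2s-1}$, which is exactly the content of $u\in\dot{W}^{1+2s,\infty}$. With $u\in\dot{H}^s\cap\dot{W}^{1+2s,\infty}$ satisfying \eqref{eq:the uniform condition at spatial infinit}, Theorem \ref{thm:main1} applies and gives $u\equiv u_\infty$.

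There is no substantive obstacle here, as Corollary \ref{cor} is a genuine corollary of the main theorem; the only point demanding mild care is that $\dot{W}^{3,\infty}$ by itself does not control low frequencies, so the uniform-convergence hypothesis is invoked precisely to supply the boundedness $u\in L^\infty$ that pins down the low-frequency part of the $\dot{B}^{1+2s}_{\infty,\infty}$ norm. Since the strict inequality $1+2s<3$ holds for every $s\in(\tfrac12,1)$, the high-frequency sum always converges and the reduction is immediate.
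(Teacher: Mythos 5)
Your proposal is correct and follows essentially the same route as the paper: the paper's one-line proof applies the fractional Gagliardo--Nirenberg inequality (Lemma \ref{lem:FGN}) with $\theta''=\frac{2-2s}{3}$ to get $\|u\|_{\dot{W}^{1+2s,\infty}}\leq C\|u\|_{L^\infty}^{\theta''}\|u\|_{\dot{W}^{3,\infty}}^{1-\theta''}$ and then invokes Theorem \ref{thm:main1}, which is exactly the interpolation between $L^\infty$ and $\dot{W}^{3,\infty}$ that you re-derive by hand via Littlewood--Paley/Bernstein. Your explicit observation that the uniform condition \eqref{eq:the uniform condition at spatial infinit} together with smoothness yields $u\in L^\infty(\mathbb{R}^3)$ is a step the paper uses tacitly, so no gap on either side.
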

    \begin{Rem}
    	By the following scaling invariant of fractional Navier-Stokes system:
    	$$
    	u_\lambda(x)=\lambda^{2s-1} u(\lambda x), \quad p_\lambda(x)=\lambda^{4s-2} p(\lambda x),
    	$$
    	we may assume the constant vector $u_{\infty}=(C,0,0)^T$, where $C\neq0$. Moreover, when $s=1$, the derivatives of the velocity and the pressure also converge to the constant in 3D, for example seeing Theorem X.5.1 in \cite{Galdi}. Hence the condition of $u\in \dot{W}^{1+2s,\infty}\left(\mathbb{R}^3\right)$ or $u\in \dot{W}^{3,\infty}\left(\mathbb{R}^3\right)$ seems to be natural.
    \end{Rem}
	
In fact, the proof of Theorem \ref{thm:main1} and Corollary \ref{cor} is based on the following criterion. 
	\begin{Thm} \label{thm:main}
		Let $\frac12<s<1$. Assume that $u \in \dot{H}^s\left(\mathbb{R}^3\right)$ is a smooth solution of \eqref{eq:main}  with the uniform condition \eqref{eq:the uniform condition at spatial infinit} such that
		\begin{eqnarray} \label{eq:D_condition}
			\int_{\mathbb{R}^3} |\nabla u |^{1+\frac1s} dx < \infty,
		\end{eqnarray}
		then $u\equiv u_\infty$.
	\end{Thm}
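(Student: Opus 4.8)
The plan is to pass to the perturbation $v:=u-u_\infty$, which vanishes uniformly at infinity and still lies in $\dot H^s$, and to prove $v\equiv 0$ through a localized energy identity whose boundary errors are killed by first upgrading the integrability of $v$ via the fractional Oseen representation. Since $u_\infty=(1,0,0)^T$ is constant, $(-\Delta)^s u_\infty=0$ and $\nabla u_\infty=0$, so inserting $u=u_\infty+v$ into \eqref{eq:main} gives the fractional Oseen system
\[
(-\Delta)^s v+\partial_1 v+\nabla p=-\operatorname{div}(v\otimes v),\qquad \operatorname{div} v=0 .
\]
Taking the Fourier transform and eliminating the pressure through the Leray projection yields
\[
\widehat v_j(\xi)=\frac{i\xi_\ell}{|\xi|^{2s}+i\xi_1}\Bigl(\delta_{jk}-\frac{\xi_j\xi_k}{|\xi|^2}\Bigr)\widehat{(v\otimes v)}_{\ell k}(\xi),\qquad p=\sum_{j,k}R_jR_k\bigl(v_jv_k\bigr),
\]
with $R_j$ the Riesz transforms. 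From $v\in\dot H^s$, the embedding $\dot H^s(\mathbb R^3)\hookrightarrow L^{6/(3-2s)}$ gives the baseline $v\in L^{6/(3-2s)}$ (whose exponent exceeds $3$ since $s>\tfrac12$), whence $v\otimes v\in L^{3/(3-2s)}$ and, by boundedness of $R_jR_k$, $p\in L^{3/(3-2s)}$.

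The role of Lizorkin's multiplier theorem is to convert this baseline into genuine decay. The Oseen symbol $|\xi|^{2s}+i\xi_1$ is not scale invariant: at high frequency $|\xi|^{2s}$ dominates and the operator smooths of order $2s-1$, while at low frequency the drift $i\xi_1$ dominates and produces the anisotropic wake. The associated multiplier fails the homogeneous Hörmander–Mikhlin condition but does satisfy the anisotropic Marcinkiewicz bounds $|\xi^\alpha D^\alpha M(\xi)|\le C$ for $\alpha\in\{0,1\}^3$, and Lizorkin's theorem is exactly the device that turns these into $L^p$ continuity. Feeding $v\otimes v\in L^{3/(3-2s)}$ together with the subcritical hypothesis $\nabla v\in L^{1+1/s}$ into the representation, and exploiting the drift-dominated far field (as in the classical case, where $u-u_\infty\in L^q$ for every $q>2$), I expect to obtain $v\in L^{q}$ for some $q<3$. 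Interpolating with the baseline $v\in L^{6/(3-2s)}$ then yields $v\in L^3(\mathbb R^3)$, and consequently $v\otimes v\in L^{3/2}$ and $p\in L^{3/2}$.

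With $v\in L^3\cap\dot H^s$, $\nabla v\in L^{1+1/s}$ and $p\in L^{3/2}$ in hand, I would close the argument with a localized energy identity. Fix a cutoff $\phi_R(x)=\phi(x/R)$ with $\phi\equiv1$ on $B_1$ and $\operatorname{supp}\phi\subset B_2$, and test the Oseen system against $\phi_R v$. Using $\operatorname{div} v=0$, the drift, pressure and convective contributions reduce to
\[
-\tfrac12\!\int \partial_1\phi_R\,|v|^2,\qquad -\!\int p\,\,v\cdot\nabla\phi_R,\qquad \tfrac12\!\int (v\cdot\nabla\phi_R)\,|v|^2 ,
\]
each supported in the annulus $\{R\le|x|\le2R\}$ and carrying a factor $\|\nabla\phi_R\|_\infty\sim R^{-1}$; by Hölder against the tail norms of $v\in L^3$ and $p\in L^{3/2}$ they all tend to $0$ as $R\to\infty$. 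The dissipation term is the genuinely nonlocal one: writing
\[
\int (-\Delta)^s v\cdot\phi_R v=\int \phi_R\,|(-\Delta)^{s/2}v|^2+\int (-\Delta)^{s/2}v\cdot[(-\Delta)^{s/2},\phi_R]v ,
\]
the Coifman–McIntosh–Meyer type commutator estimate bounds $[(-\Delta)^{s/2},\phi_R]v$ by quantities scaling like $R^{-1}$ times lower-order norms of $v$, so the commutator error vanishes in the limit while $\int \phi_R|(-\Delta)^{s/2}v|^2\uparrow\|(-\Delta)^{s/2}v\|_{L^2}^2$ by monotone convergence. Letting $R\to\infty$ forces $\|(-\Delta)^{s/2}v\|_{L^2}=0$; hence $\widehat v$ is supported at the origin, $v$ is constant, and since $v\to0$ at infinity we conclude $v\equiv0$, i.e. $u\equiv u_\infty$.

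The main obstacle is harmonic-analytic and concentrated in two places. First, extracting decay with exponent below $3$ from the fractional Oseen representation: the multiplier is anisotropic and singular along $\{\xi_1=0\}$ at low frequency, so the Lizorkin bounds must be checked separately in the drift-dominated and smoothing-dominated regimes, and it is precisely here that the subcritical bound $\nabla v\in L^{1+1/s}$ is consumed (this is also the step that replaces the stronger integrability assumptions used for $s$ near $1$ in earlier work). Second, the commutator estimate for $[(-\Delta)^{s/2},\phi_R]$ must be made quantitative in $R$ while remaining compatible with only $\dot H^s$—rather than $\dot H^1$—control of $v$; pairing the CMM-type bound, which decays in $R$, against $(-\Delta)^{s/2}v\in L^2$ is the technical heart of the localization and the reason the natural energy $\int_{\mathbb R^3}|(-\Delta)^{s/2}v|^2$ can be localized at all.
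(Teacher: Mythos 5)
Your overall architecture (pass to $v=u-u_\infty$, upgrade to $v\in L^3$ and $p\in L^{3/2}$, then kill $\|(-\Delta)^{s/2}v\|_{L^2}^2$ by a cutoff energy identity) is the same as the paper's, and your final step is essentially the paper's Step III. The genuine gap is in the integrability upgrade. You propose to feed the nonlinearity into the linear fractional Oseen representation as a source and let Lizorkin's theorem produce decay, but this cannot work as stated, because the problem is scaling-critical with respect to the norms available. In non-divergence form: $v\cdot\nabla v\in L^m$ with $\frac1m=\frac1{r_0}+\frac{s}{s+1}$ for either baseline $r_0\in\bigl\{\frac{6}{3-2s},\frac{3(s+1)}{2s-1}\bigr\}$, while the Lizorkin gain of the Oseen multiplier is exactly $\beta=\frac{s}{s+1}$ (Lemma \ref{lem:3D stationary linear generalized degenerate Ossen system}); the output exponent is then $r_0$ again --- zero gain, so the hypothesis $\nabla v\in L^{1+1/s}$ is not actually ``consumed'' in your scheme. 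In divergence form: the extra factor $i\xi_\ell$ makes the symbol decay only like $|\xi|^{1-2s}$ at high frequency, which against the weight $|\xi_1\xi_2\xi_3|^\beta$ caps the admissible gain at $\beta=\frac{2s-1}{3}$; starting from $v\otimes v\in L^{3/(3-2s)}$ this yields $\frac1q=\frac{4-4s}{3}$, which improves on the baseline only for $s<\frac56$ and reaches $q\le 3$ (after iteration) only in that range --- for $s\in[\frac56,1)$ the iteration moves \emph{away} from $L^3$. Hence your claim ``$v\in L^q$ for some $q<3$'' is unsubstantiated on part of the stated range, and the analogy with the classical Oseen theory is misleading: there the decay $u-u_\infty\in L^q$, $q>2$, comes from pointwise wake estimates on the Oseen fundamental solution, not from a multiplier bound applied to the nonlinearity treated as a source.

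What the paper does instead --- and what your proposal is missing --- is a smallness/absorption mechanism. The paper cuts off \emph{outside a large ball}: with $\psi\equiv 0$ on $B_M$ and $\psi\equiv 1$ off $B_{2M}$, the pair $(\psi v,\psi p)$ solves a \emph{perturbed} Oseen system in which the quadratic term appears not as a source but as $\mathcal{M}(\psi v)$ with $\mathcal{M}=(\nabla v)\chi_{B_M^c}$, whose $L^{\frac{s+1}{s}}$ norm is made arbitrarily small by choosing $M$ large --- this is exactly where the hypothesis \eqref{eq:D_condition} is consumed. The remaining terms $F(\psi)$ and $\nabla\psi\cdot v$ involve derivatives of $\psi$, are supported in the annulus $B_{2M}\setminus B_M$ (the commutator $(-\Delta)^s(\psi v)-\psi(-\Delta)^s v$ being controlled by the Coifman--McIntosh--Meyer bounds of Lemma \ref{thm2.2}), and therefore lie in the subcritical space $L^k$ with $k=\frac{3(s+1)}{4s+1}$; Kato's stability theorem (Lemma \ref{lem2.5}, packaged as Lemma \ref{lem:WangYang}) then gives $\psi v\in L^3$ for the whole range $\frac12<s<1$. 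Without this localization-plus-absorption step, or some substitute for it, your Step II does not close. Once $v\in L^3$ and $p\in L^{3/2}$ are granted, your energy argument is correct in substance, though the commutator error there decays like $R^{\frac12-s}$ (it vanishes only because $s>\frac12$), not like $R^{-1}$, and the paper controls it with the fractional Leibniz rule (Lemma \ref{lem:Fractional Leibniz rule}) rather than a CMM bound.
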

	\begin{Rem}
		It seems to be difficult to  remove the condition \( \nabla u \in L^{1+1/s}\left(\mathbb{R}^3\right) \). The reason is that when proving \(L^p\) estimates for the perturbed fractional linear Oseen system(see Lemma \ref{lem:WangYang}), an appropriate norm of \(\nabla u\) is required, which cannot be supplied by the natural energy space \( \dot{H}^s\left(\mathbb{R}^3\right) \) to which \(u\) belongs.
	\end{Rem}
	
	This paper is organized as follows: some notations and some necessary lemmas are presented in Sect. \ref{sec2}; In Sect. \ref{sec3}, we prove Theorem \ref{thm:main1} and Corollary \ref{cor} under the criterion Theorem \ref{thm:main}; The Liouville type theorem on the fractional Navier-Stokes system with $u_\infty\neq0$ is obtained in Sect. \ref{sec4}, where we give the detailed proof of Theorem \ref{thm:main}.

	Throughout this paper, $C\left(c_1, c_2, \ldots, c_n\right)$ denotes a positive constant depending on $c_1, c_2, \ldots c_n$ which may be different from line to line.

	\section{Preliminaries} \label{sec2}
	
	Firstly, we introduce some notations. Denote by $B_r\left(x_0\right):=\left\{x \in \mathbb{R}^3:\left|x-x_0\right|<r\right\}$ and $B_r:=B_r(0)$. Denote by $\nabla^\gamma:=\partial_{x_1}^{\gamma_1} \partial_{x_2}^{\gamma_2} \partial_{x_3}^{\gamma_3}$, where  $\gamma=\left(\gamma_1, \gamma_2, \gamma_3\right)$, $\gamma_1, \gamma_2, \gamma_3 \in \mathbb{N} \cup\{0\}$, $\partial_i = \frac{\partial}{\partial x_i}$ and $|\gamma|=\gamma_1+\gamma_2+\gamma_3$. Denote $L^p(\Omega)$ by the usual Lebesgue space with the norm
	$$
	\|f\|_{L^p(\Omega)}:= \begin{cases}\left(\int_{\Omega}|f(x)|^p d x\right)^{1 / p}, & 1 \leq p<\infty, \\ \underset{x \in \Omega}{\operatorname{ess\, sup}}|f(x)|, & p=\infty,\end{cases}
	$$
	where $\Omega \subset \mathbb{R}^3, 1 \leq p \leq \infty$. $W^{k, p}(\Omega)$  and $\dot{W}^{k, p}(\Omega)$ are defined as follows:
	$$
	\begin{aligned}
		\|f\|_{W^{k, p}(\Omega)} & :=\sum_{0 \leq|\gamma| \leq k}\left\|\nabla^\gamma f\right\|_{L^p(\Omega)}, \\
		\|f\|_{\dot{W}^{k, p}(\Omega)} & :=\sum_{|\gamma|=k}\left\|\nabla^\gamma f\right\|_{L^p(\Omega)},
	\end{aligned}
	$$
	respectively. $C^{\infty}(\Omega)$ denotes the space of smooth functions on $\Omega$. $\mathcal{S}\left(\mathbb{R}^n\right)$ denotes the space of rapid decreasing smooth functions on $\mathbb{R}^n$. $\mathcal{P}_n$ stands for the space of polynomials in $\mathbb{R}^3$ with their degree no bigger than $n$. If $0<s<n$, the Riesz potential $I^s f$ of a locally integrable function $f$ on $\mathbb{R}^n$ is the function defined by
	$$
	\left(I^s f\right)(x)=\frac{1}{C(s)} \int_{\mathbb{R}^n} \frac{f(y)}{|x-y|^{n-s}} \mathrm{d} y.
	$$

	 Next, we introduce the commutators in terms of H\"{o}lder norms, namely we consider
	\begin{eqnarray} \label{eq:commutators}
		\left[(-\Delta)^{s}, g\right](f)=(-\Delta)^{s}(g f)-g(-\Delta)^{s} f.
	\end{eqnarray}
	The following lemma is called Coifman-McIntosh-Meyer type commutator estimate, which plays an important role in dealing with the commutator $$(-\Delta)^s\left[\psi(x)v(x)\right]-	\psi(x)(-\Delta)^s v(x) $$
 in the proof of the main result (see \eqref{eq:I_2}, \eqref{eq:I4} and \eqref{eq:I5}, for example).
	\begin{Lem}[Coifman-McIntosh-Meyer type commutator estimate, Theorem 6.1, \cite{LS20}] \label{thm2.2}
Let $s \in(0,1]$ and $p \in(1, \infty)$. Then, for $\sigma \in[s, 1], f, g \in C_c^{\infty}\left(\mathbb{R}^n\right)$, it holds
$$
\left\|\left[(-\Delta)^{\frac{s}{2}}, g\right](f)\right\|_{L^p\left(\mathbb{R}^n\right)} \lesssim[g]_{C^\sigma}\left\|I^{\sigma-s} f\right\|_{L^p\left(\mathbb{R}^n\right)}.
$$
Also, for $q_1, q_2, p \in(1, \infty), \frac{1}{q_1}+\frac{1}{q_2}=\frac{1}{p}, \sigma \in[s, 1)$, it holds
\begin{equation} \label{eq:CoifmanMcIntoshMeyer type commutator estimate}
\left\|\left[(-\Delta)^{\frac{s}{2}}, g\right](f)\right\|_{L^p\left(\mathbb{R}^n\right)} \lesssim\left\|(-\Delta)^{\frac{\sigma}{2}} g\right\|_{L^{q_1}\left(\mathbb{R}^n\right)}\left\|I^{\sigma-s} f\right\|_{L^{q_2}\left(\mathbb{R}^n\right)}.
\end{equation}
	\end{Lem}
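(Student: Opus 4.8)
The plan is to start from the singular-integral representation of the fractional Laplacian. Since $s/2\in(0,\tfrac12]$, one has $(-\Delta)^{s/2}h(x)=c_{n,s}\,\mathrm{p.v.}\int_{\mathbb{R}^n}\frac{h(x)-h(y)}{|x-y|^{n+s}}\,dy$ for $h\in C_c^\infty(\mathbb{R}^n)$, and, subtracting $g(x)(-\Delta)^{s/2}f(x)$ from $(-\Delta)^{s/2}(gf)(x)$, the principal values recombine (using the compact support and smoothness of $f,g$) into the single kernel
\[
\big[(-\Delta)^{s/2},g\big](f)(x)=c_{n,s}\,\mathrm{p.v.}\int_{\mathbb{R}^n}\frac{\big(g(x)-g(y)\big)\,f(y)}{|x-y|^{n+s}}\,dy .
\]
This identity is the backbone of both estimates: the factor $g(x)-g(y)$ supplies exactly the cancellation that must be traded against the order-$s$ singularity of the kernel, and the whole question becomes how that cancellation is measured---pointwise by a H\"older modulus for the first bound, or in an averaged $L^{q_1}$ sense for the second.

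For the first inequality in the regime $\sigma>s$ this is essentially immediate: inserting $|g(x)-g(y)|\le[g]_{C^\sigma}|x-y|^\sigma$ turns the kernel into the positive Riesz kernel of order $\sigma-s$, so that pointwise $|[(-\Delta)^{s/2},g](f)(x)|\lesssim[g]_{C^\sigma}\,I^{\sigma-s}(|f|)(x)$, and taking $L^p$ norms gives the claim (with $|f|$ in place of $f$, which by positivity of $I^{\sigma-s}$ is the form needed in the applications). The delicate points are the borderline case $\sigma=s$, where the kernel $|x-y|^{-n}$ is no longer locally integrable and the pointwise majorization fails, and the sharp formulation with $I^{\sigma-s}f$ rather than $I^{\sigma-s}|f|$; both force one out of the soft pointwise argument and into genuine Calder\'on--Zygmund theory, since at $\sigma=s$ the commutator is an operator of order zero and the estimate reduces to the Coifman--McIntosh--Meyer boundedness of the associated singular integral with a $C^s$ symbol.

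I would handle these sharp cases, together with the second (product-type) estimate, through the Caffarelli--Silvestre harmonic extension, which is the route of the cited reference. Extending $f$ and $g$ to $s$-harmonic functions on $\mathbb{R}^{n+1}_+$ and integrating by parts against the weight $t^{1-s}$, one rewrites the bilinear pairing $\langle[(-\Delta)^{s/2},g](f),\varphi\rangle$ as a solid integral over the half-space in the gradients of the extensions, where the commutator structure guarantees that each term carries at least one derivative onto the extension of $g$. One then controls the relevant conical square functions of the extensions by $\|(-\Delta)^{\sigma/2}g\|_{L^{q_1}}$ and by $\|I^{\sigma-s}f\|_{L^{q_2}}$ respectively, and combines them by H\"older's inequality in tent spaces using $\tfrac1{q_1}+\tfrac1{q_2}=\tfrac1p$. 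Equivalently, one may substitute $g=I^{\sigma}\big[(-\Delta)^{\sigma/2}g\big]$ into the kernel identity above and reduce the product estimate to an $L^{q_1}\times L^{q_2}\to L^p$ bound for an explicit bilinear singular integral, verified by multilinear Calder\'on--Zygmund or paraproduct methods. I expect the genuine obstacle to be precisely this product bound: here $g$ is controlled only through the \emph{averaged} quantity $\|(-\Delta)^{\sigma/2}g\|_{L^{q_1}}$, so the pointwise trading of cancellation against singularity is unavailable, and one must instead exploit the square-function/tent-space (or paraproduct) structure to distribute the regularity of $g$ across scales while keeping the H\"older exponent bookkeeping exact.
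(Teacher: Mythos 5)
The paper never proves this lemma: it is imported verbatim as Theorem 6.1 of \cite{LS20} (Lenzmann--Schikorra, \emph{Sharp commutator estimates via harmonic extensions}), and the only paper-side addition is the remark that compact support enters the proof in \cite{LS20} solely through boundary terms. So there is no internal argument to compare you against; the comparison has to be with the cited source. Measured against that, your reconstruction is strategically faithful: the kernel identity
\begin{equation*}
\big[(-\Delta)^{s/2},g\big](f)(x)=c_{n,s}\,\mathrm{p.v.}\int_{\mathbb{R}^n}\frac{\big(g(x)-g(y)\big)f(y)}{|x-y|^{n+s}}\,dy
\end{equation*}
is correct, the pointwise domination for $\sigma\in(s,1]$ yielding $\lesssim[g]_{C^\sigma}\,I^{\sigma-s}(|f|)(x)$ is valid (and indeed no principal value is needed there, since the cancellation $|g(x)-g(y)|\le[g]_{C^\sigma}|x-y|^\sigma$ makes the kernel locally integrable), and the Caffarelli--Silvestre extension with weight $t^{1-s}$, square-function/tent-space estimates, and the paraproduct reduction via $g=I^\sigma\big[(-\Delta)^{\sigma/2}g\big]$ are exactly the tools of \cite{LS20}.

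The gap is that, as a proof, your attempt establishes only the easy regime, and the cases you defer are precisely the ones this paper uses. In \eqref{eq:I_2}, \eqref{eq:I4} and \eqref{eq:I5} the lemma is invoked with $\sigma$ equal to the order of the commutator, i.e.\ at the borderline $\sigma=s$ where $I^{\sigma-s}$ is the identity: there the kernel is $|x-y|^{-n}$, the pointwise H\"older trade fails (as you yourself note), and the bound is a genuine order-zero Coifman--McIntosh--Meyer/Calder\'on-commutator theorem. Your sketch names the machinery but carries out none of its estimates; in particular the bilinear bound \eqref{eq:CoifmanMcIntoshMeyer type commutator estimate}, where $g$ is seen only through the averaged norm $\|(-\Delta)^{\sigma/2}g\|_{L^{q_1}}$, is the entire hard content and remains unproven in your write-up. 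A further small caveat: your parenthetical that the weaker $I^{\sigma-s}(|f|)$ form is ``the form needed in the applications'' is true here only because the applications sit at $\sigma=s$, where $I^{0}f=f$ and the distinction between $I^{\sigma-s}f$ and $I^{\sigma-s}|f|$ is vacuous --- but at that exponent your elementary argument does not apply at all. Verdict: right roadmap, matching the cited reference's method, but the statement as actually used in this paper is not established by your argument; since the paper quotes rather than proves the lemma, completing your sketch would amount to reproducing the substance of \cite{LS20}.
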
 
	For $n=1$ and $s=\frac12$, the commutator \eqref{eq:commutators} is also called the first Calderón commutator \cite{Cal65}.
	\begin{Rem}
		In fact, Lemma \ref{thm2.2} remains valid if we replace \( f, g \in C_c^{\infty}(\mathbb{R}^n) \) with \( f, g \in C^{\infty}(\mathbb{R}^n) \) and the boundary terms related to \( f \) and \( g \) vanish in integration by parts, as the compact support condition is only used to eliminate boundary terms in the proof of Lemma \ref{thm2.2}.
	\end{Rem}
	The following lemma concerns the fractional Leibniz rule, providing an alternative approach to handling commutators ( See \cite{GO14} and \cite{NT2019} for example). 
	\begin{Lem}[Fractional Leibniz rule] \label{lem:Fractional Leibniz rule}
		Let $f$, $g$ be two smooth functions in $\mathbb{R}^3$ and $1<p<\infty$. It holds that: \\
		(1) for $s>0$ and $1<p_0, p_1, q_0, q_1 \leq \infty$,
		$$
		\left\|(-\Delta)^{\frac{s}{2}}(f g)\right\|_{L^p\left(\mathbb{R}^3\right)} \leq C\left\|(-\Delta)^{\frac{s}{2}} f\right\|_{L^{p_0}\left(\mathbb{R}^3\right)}\|g\|_{L^{p_1}\left(\mathbb{R}^3\right)}+C\|f\|_{L^{q_0}\left(\mathbb{R}^3\right)}\left\|(-\Delta)^{\frac{s}{2}} g\right\|_{L^{q_1}\left(\mathbb{R}^3\right)},
		$$
		where $\frac{1}{p}=\frac{1}{p_0}+\frac{1}{p_1}=\frac{1}{q_0}+\frac{1}{q_1}$.\\
		(2) for $0<s, s_1, s_2<1< p_1, p_2<\infty$,
		$$
		\left\|(-\Delta)^{\frac{s}{2}}(f g)-(-\Delta)^{\frac{s}{2}}(f) g-(-\Delta)^{\frac{s}{2}}(g) f\right\|_{L^p\left(\mathbb{R}^3\right)} \leq C\left\|(-\Delta)^{\frac{s_1}{2}} f\right\|_{L^{p_1}\left(\mathbb{R}^3\right)}\left\|(-\Delta)^{\frac{s_2}{2}} g\right\|_{L^{p_2}\left(\mathbb{R}^3\right)} ,
		$$
		where $s=s_1+s_2$ and $\frac{1}{p}=\frac{1}{p_1}+\frac{1}{p_2}$.
	\end{Lem}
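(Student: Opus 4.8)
The plan is to prove both parts by the standard Littlewood--Paley/paraproduct machinery underlying Kato--Ponce type estimates, since the statement is essentially a homogeneous fractional Leibniz rule together with its commutator refinement. Fix a homogeneous Littlewood--Paley family, writing $P_j$ for the projection onto frequencies $|\xi|\sim 2^j$ and $S_j=\sum_{k\le j-1}P_k$ for the low-frequency cutoff. The two facts I would use repeatedly are Bernstein's inequality $\|(-\Delta)^{\sigma/2}P_j h\|_{L^r}\lesssim 2^{j\sigma}\|P_j h\|_{L^r}$ and the square-function characterization $\|(-\Delta)^{\sigma/2}h\|_{L^r}\sim \big\|(\sum_j 2^{2j\sigma}|P_j h|^2)^{1/2}\big\|_{L^r}$, valid for $1<r<\infty$. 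I then split the product by Bony's formula
\[
fg=T_fg+T_gf+R(f,g),\qquad T_fg=\sum_j S_{j-1}f\,P_jg,\quad R(f,g)=\sum_{|j-k|\le1}P_jf\,P_kg,
\]
so that $T_gf$ collects the interactions where the high frequency sits on $f$, $T_fg$ those where it sits on $g$, and $R$ the resonant (high--high) interactions.

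For part (1), I would apply $(-\Delta)^{s/2}$ to each paraproduct. In $T_fg$ each summand $S_{j-1}f\,P_jg$ is frequency-localized near $2^j$, so by the square-function bound its $(-\Delta)^{s/2}$-image is controlled by $\big\|(\sum_j 2^{2js}|S_{j-1}f\,P_jg|^2)^{1/2}\big\|_{L^p}$; pulling out $\sup_j|S_{j-1}f|\lesssim Mf$ (with $M$ the Hardy--Littlewood maximal operator), Hölder with $\tfrac1p=\tfrac1{q_0}+\tfrac1{q_1}$ and $L^{q_0}$-boundedness of $M$ give the bound $\|f\|_{L^{q_0}}\|(-\Delta)^{s/2}g\|_{L^{q_1}}$. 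The symmetric term $T_gf$ gives $\|(-\Delta)^{s/2}f\|_{L^{p_0}}\|g\|_{L^{p_1}}$. The resonance term $R(f,g)$ is the genuinely delicate one: the product $P_jf\,P_kg$ with $|j-k|\le1$ is only frequency-supported in $|\xi|\lesssim 2^j$, so $(-\Delta)^{s/2}$ no longer localizes the output; I would reorganize by the output frequency $2^m\lesssim 2^j$, estimate each block by $2^{ms}$ via Bernstein, and sum, the convergence of $\sum_{m\le j}2^{(m-j)s}$ being exactly where the hypothesis $s>0$ enters. This piece may be placed under either term on the right-hand side.

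For part (2), I would insert the Bony decomposition into all three products appearing in
\[
C(f,g):=(-\Delta)^{s/2}(fg)-f\,(-\Delta)^{s/2}g-g\,(-\Delta)^{s/2}f.
\]
The purpose of subtracting the two ``diagonal'' terms is that, in the low--high and high--low paraproducts, the contributions where $(-\Delta)^{s/2}$ falls entirely on the high-frequency factor cancel against $f(-\Delta)^{s/2}g$ and $g(-\Delta)^{s/2}f$, leaving only the commutators $[(-\Delta)^{s/2},S_{j-1}f]P_jg$ and $[(-\Delta)^{s/2},S_{j-1}g]P_jf$ together with the resonance term. A symbol Taylor expansion shows each such commutator gains regularity off the low-frequency factor, so the $s$ derivatives may be redistributed as $s_1$ on $f$ and $s_2$ on $g$; after the same square-function/Hölder argument with $\tfrac1p=\tfrac1{p_1}+\tfrac1{p_2}$ this yields $\|(-\Delta)^{s_1/2}f\|_{L^{p_1}}\|(-\Delta)^{s_2/2}g\|_{L^{p_2}}$. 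The resonance contribution is again handled by summing over the low output frequency, now using $0<s_1,s_2<1$ and $s=s_1+s_2$ both to split the derivatives and to guarantee summability.

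The hard part will be twofold. First, the resonance/high--high term in both parts, where the absence of output-frequency localization forces the summation over low output frequencies and is the sole place the positivity of $s$ (respectively the admissible splitting $s=s_1+s_2$) is used. Second, the endpoint exponents $p_1=\infty$ or $q_0=\infty$ allowed in part (1): there the square-function characterization of the $L^\infty$ factor is unavailable and must be replaced by a $BMO$/maximal-function substitute, which is the only genuinely technical deviation from the core argument. Alternatively, since the statement coincides with the fractional Leibniz rule and its commutator form established in \cite{GO14,NT2019}, one may simply invoke those references; the sketch above records the mechanism behind them.
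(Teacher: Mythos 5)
The paper offers no proof of this lemma: it is stated as a known result, quoted from the literature with the pointer ``See \cite{GO14} and \cite{NT2019}'' --- part (1) is the homogeneous Kato--Ponce inequality of Grafakos--Oh and part (2) is the Kenig--Ponce--Vega-type commutator refinement treated by Naibo--Thomson. So the citation you offer in your closing sentence is exactly the paper's own ``proof,'' and your paraproduct sketch is a reconstruction of what happens inside those references rather than an alternative to anything written in the paper. As a roadmap the sketch is accurate: Bony decomposition, the equivalence $\|(-\Delta)^{\sigma/2}h\|_{L^r}\sim\bigl\|(\sum_j 2^{2j\sigma}|P_jh|^2)^{1/2}\bigr\|_{L^r}$ for $1<r<\infty$, maximal-function control of the low-frequency factors, Bernstein summation over output frequencies for the resonant piece (the one place $s>0$ is used), and, for part (2), the cancellation that converts the paraproduct contributions into commutators $[(-\Delta)^{s/2},S_{j-1}f]P_jg$ whose symbol expansion permits the $s=s_1+s_2$ redistribution, with $s_1,s_2<1$ reflecting that the commutator gains at most one full derivative. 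Two caveats if this were to be expanded into a complete proof: (i) your bookkeeping in part (2) is incomplete, since Bony-decomposing $f(-\Delta)^{s/2}g$ and $g(-\Delta)^{s/2}f$ also produces the uncancelled terms $T_{(-\Delta)^{s/2}g}f$, $T_{(-\Delta)^{s/2}f}g$, $R(f,(-\Delta)^{s/2}g)$ and $R(g,(-\Delta)^{s/2}f)$, which cancel against nothing and must be estimated directly (they are tractable by the same square-function/maximal-function argument, but they do not disappear); (ii) the $L^\infty$ endpoints permitted in part (1), which you rightly flag as requiring $BMO$/Carleson substitutes for the square function, constitute a substantial portion of \cite{GO14} and cannot be dispatched in a line. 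Neither caveat affects the paper itself, which invokes the lemma only with finite exponents and in any case never proves it.
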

	Then, we introduce the following multiplier theorem by Lizorkin(see \cite{Liz1963}, \cite{Liz1967} or Chapter VII.4 \cite{Galdi}), which is crucial for the proof of our main results.
	\begin{Thm}[Lizorkin]\label{lem:lizorkin}
		Let $\Phi: \mathbb{R}^n \rightarrow \mathbb{C}$ be continuous together with the derivative
		$$
		\frac{\partial^n \Phi}{\partial \xi_1 \ldots \partial \xi_n}
		$$
		and all preceding derivatives for $\left|\xi_i\right|>0, i=1, \ldots, n$. Then, if for some $\beta \in[0,1)$ and $M>0$
		\begin{equation}
			\label{eq:bound of phi}
			\left|\xi_1\right|^{\kappa_1+\beta} \cdot \ldots \cdot\left|\xi_n\right|^{\kappa_n+\beta}\left|\frac{\partial^\kappa \Phi}{\partial \xi_1^{\kappa_1} \ldots \partial \xi_n^{\kappa_n}}\right| \leq M,
		\end{equation}
		where $\kappa_i$ is zero or one and $\kappa=\sum_{i=1}^n \kappa_i=0,1, \ldots n$, the integral transform
		$$
		T u =\frac{1}{(2 \pi)^{n / 2}} \int_{\mathbb{R}^n} e^{i \boldsymbol{x} \cdot \boldsymbol{\xi}} \Phi(\xi) \widehat{u}(\xi) d \xi, \quad u \in \mathcal{S}\left(\mathbb{R}^n\right),
		$$
		defines a bounded linear operator from $L^q\left(\mathbb{R}^n\right)$ into $L^r\left(\mathbb{R}^n\right), 1<q<$ $\infty, 1 / r=1 / q-\beta$, and we have
		$$
		\|T u\|_{L^r\left(\mathbb{R}^n\right)} \leq C (q, \beta, M)\|u\|_{L^q\left(\mathbb{R}^n\right)}.
		$$
	\end{Thm}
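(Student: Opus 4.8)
The plan is to reduce the statement to two classical facts by factoring the symbol. Write $m_\beta(\xi):=\prod_{i=1}^n|\xi_i|^{-\beta}$ and $\Psi(\xi):=\Phi(\xi)\,\prod_{i=1}^n|\xi_i|^{\beta}$, so that $\Phi=\Psi\,m_\beta$ and correspondingly $T=T_\Psi\circ T_{m_\beta}$, where $T_\Psi$ and $T_{m_\beta}$ denote the Fourier multiplier operators with symbols $\Psi$ and $m_\beta$. The first step is to verify that $\Psi$ satisfies the \emph{Marcinkiewicz conditions}, i.e.\ the hypothesis \eqref{eq:bound of phi} with $\beta=0$: for every multi-index $\kappa$ with entries in $\{0,1\}$ one has $\prod_{i=1}^n|\xi_i|^{\kappa_i}\,|\partial^\kappa\Psi|\le M'$. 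Because each $\kappa_i\le 1$, the Leibniz rule expands $\partial^\kappa(\Phi\,\prod_i|\xi_i|^\beta)$ as a sum over subsets $S$ of the active indices, each derivative either falling on $\Phi$ or producing a factor $\beta|\xi_i|^{\beta-1}\mathrm{sgn}(\xi_i)$; multiplying by $\prod_i|\xi_i|^{\kappa_i}$ and regrouping, every term is bounded by $\beta^{|S^c|}$ times a quantity controlled by \eqref{eq:bound of phi} for the sub-multi-index supported on $S$. Summing the at most $2^n$ terms and using $\beta<1$ gives $M'\le 2^nM$.

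Granting this, the second step is the endpoint $\beta=0$ statement, which is exactly the \emph{Lizorkin form of the Marcinkiewicz multiplier theorem}: a symbol obeying the mixed-derivative bounds with $\kappa_i\in\{0,1\}$ defines a bounded operator on $L^p(\mathbb{R}^n)$ for every $1<p<\infty$. I would prove this through product Littlewood--Paley theory: introduce the one-dimensional dyadic projections $\Delta^{(i)}_{j}$ in each variable, form the tensor projections $\Delta_{\vec\jmath}=\prod_i\Delta^{(i)}_{j_i}$, and use the product square-function equivalence $\|f\|_{L^p}\simeq\big\|(\sum_{\vec\jmath}|\Delta_{\vec\jmath}f|^2)^{1/2}\big\|_{L^p}$, obtained by iterating the one-dimensional Littlewood--Paley inequality together with Khintchine's inequality (the vector-valued Calder\'on--Zygmund theory). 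On each dyadic block the Marcinkiewicz bound says $\Psi$ has uniformly bounded variation, so summation by parts expresses $\Delta_{\vec\jmath}T_\Psi f$ as an average of the $\Delta_{\vec\jmath'}f$ with coefficients summable by $M'$; the square-function equivalence then yields $\|T_\Psi f\|_{L^p}\le C(p)M'\|f\|_{L^p}$. Equivalently one may iterate the one-dimensional vector-valued Marcinkiewicz theorem, valid because $L^p$ is a UMD space, freezing all but one variable at a time.

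The third step treats the gain. Since $m_\beta$ is the tensor product of the one-dimensional symbols $|\xi_i|^{-\beta}$, the operator $T_{m_\beta}$ is, up to a constant, convolution with the product kernel $\prod_i|x_i|^{\beta-1}$, i.e.\ the composition $A_n\cdots A_1$ of the one-dimensional Riesz potentials $A_i$ acting in the $i$-th variable. Applying the one-dimensional Hardy--Littlewood--Sobolev inequality one variable at a time in mixed-norm spaces --- passing each convolution through the remaining norm by Minkowski's integral inequality --- upgrades the exponent from $q$ to $r$ in each coordinate while keeping the others fixed, so the composition lands in $L^r(\mathbb{R}^n)$ with $1/r=1/q-\beta$; a scaling check confirms that the \emph{net} gain is exactly $\beta$ rather than $n\beta$. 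This requires $0<\beta<1/q$ so that $r\in(1,\infty)$, which is implicit in $1/r=1/q-\beta$. Composing $T=T_\Psi\circ T_{m_\beta}\colon L^q\to L^r\to L^r$ and invoking Step~2 on $L^r$ (here $1<r<\infty$) gives the claimed bound on the dense class $\mathcal S(\mathbb{R}^n)$, and boundedness extends $T$ to all of $L^q(\mathbb{R}^n)$.

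The main obstacle is Step~2, the $\beta=0$ Marcinkiewicz/Lizorkin theorem: unlike the isotropic H\"ormander--Mikhlin condition, the product (anisotropic) hypothesis with $\kappa_i\in\{0,1\}$ is not handled by a single Calder\'on--Zygmund kernel estimate and forces genuinely multi-parameter harmonic analysis --- either the product square-function machinery or an iterated vector-valued (UMD) argument, both of which must be set up so that the constants depend only on $p$ and $M'$. A secondary technical point is that $\Phi$ is only assumed smooth off the coordinate hyperplanes $\{\xi_i=0\}$, so the dyadic and summation-by-parts manipulations must be carried out away from these hyperplanes, the contributions near them being absorbed using the size control coming from \eqref{eq:bound of phi}.
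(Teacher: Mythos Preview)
The paper does not supply a proof of this theorem: it is quoted as a known tool, with references to Lizorkin's original articles \cite{Liz1963,Liz1967} and to Chapter~VII.4 of Galdi \cite{Galdi}, and is then invoked as a black box in Lemmas~\ref{lem:3D stationary linear generalized degenerate Ossen system} and~\ref{lem:WangYang}. So there is no in-paper argument to compare yours against.

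Your sketch is nonetheless a sound and standard route to Lizorkin's theorem. The factorization $\Phi=\Psi\,m_\beta$ with $m_\beta(\xi)=\prod_i|\xi_i|^{-\beta}$ cleanly separates the smoothing part from a Marcinkiewicz multiplier: your Step~1 computation reducing the Lizorkin hypothesis with parameter $\beta$ to the $\beta=0$ (Marcinkiewicz) condition for $\Psi$ is correct, with the bound $M'\le 2^nM$; and in Step~3 the iterated one-dimensional Hardy--Littlewood--Sobolev argument through mixed-norm spaces (each $I_\beta^{(i)}$ upgrading one $L^q$ slot to $L^r$ via Minkowski plus scalar HLS) does produce $T_{m_\beta}:L^q\to L^r$ with $1/r=1/q-\beta$. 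The substantive work is all in Step~2, the $\beta=0$ Marcinkiewicz--Lizorkin theorem itself; your outline via product Littlewood--Paley square functions, or equivalently an iterated UMD-valued one-variable argument, is exactly how this is done in modern treatments, but turning that outline into a rigorous proof is considerably longer than Steps~1 and~3 combined. In the context of the present paper, citing the result---as the authors do---is the appropriate choice.
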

	
	In order to deal with the perturbation system of the generalized Oseen system, we introduce the definition of the relative bounded-ness and Kato's stability theorem of bounded invertibility (see Chapter Four, §1, Theorem 1.16 in \cite{Kato}). 
	\begin{Def}
		Let $\mathcal{T}$ and $\mathcal{A}$ be operators with the same domain space $\mathcal{X}$ (but not necessarily with the same range space) such that $\mathrm{D}(\mathcal{T}) \subset \mathrm{D}(\mathcal{A})$ and
		\begin{equation}
			\label{def2.13}
			\|\mathcal{A} u\| \leq a\|u\|+b\|\mathcal{T} u\|, \quad u \in \mathrm{D}(\mathcal{T}),
		\end{equation}
		where $a, b$ are nonnegative constants. Then we shall say that $\mathcal{A}$ is relatively bounded with respect to $\mathcal{T}$ or simply $\mathcal{T}$-bounded.
	\end{Def}
	\begin{Lem}[Kato]
		\label{lem2.5}
		Assume $\mathcal{X}$ and $\mathcal{Y}$ are Banach spaces. Let $\mathcal{T}$ and $\mathcal{A}$ be operators from $\mathcal{X}$ to $\mathcal{Y}$. Let $\mathcal{T}^{-1}$ exist and belong to $\mathscr{B}(\mathcal{Y}, \mathcal{X})$ ($\mathscr{B}(\mathcal{Y}, \mathcal{X})$ is the set of all bounded operators on $\mathcal{Y}$ to $\mathcal{X} $ and so that $\mathcal{T}$ is closed). Let $\mathcal{A}$ be $\mathcal{T}$-bounded, with the constants $a, b$ in (\ref{def2.13}) satisfying the inequality
		$$
		a\left\|\mathcal{T}^{-1}\right\|+b<1 .
		$$
		Then $\mathcal{S}=\mathcal{T}+\mathcal{A}$ is closed and invertible, and 
		\begin{eqnarray}\label{eq:kato}
			\left\|\mathcal{S}^{-1}\right\|\leq \frac{\left\|\mathcal{T}^{-1}\right\|}{1-a\left\|\mathcal{T}^{-1}\right\|-b}.
		\end{eqnarray}
	\end{Lem}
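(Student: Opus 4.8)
The plan is to reduce the invertibility of $\mathcal{S}=\mathcal{T}+\mathcal{A}$ to a Neumann series argument for a genuinely bounded operator on $\mathcal{Y}$, exploiting the factorization $\mathcal{S}=(I+\mathcal{A}\mathcal{T}^{-1})\mathcal{T}$. First I would verify that $\mathcal{A}\mathcal{T}^{-1}$ is a bounded operator defined on all of $\mathcal{Y}$. Since $\mathcal{T}^{-1}\in\mathscr{B}(\mathcal{Y},\mathcal{X})$ has range $\mathrm{D}(\mathcal{T})\subset\mathrm{D}(\mathcal{A})$, for every $v\in\mathcal{Y}$ the vector $\mathcal{A}\mathcal{T}^{-1}v$ is well defined. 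Applying the relative boundedness inequality \eqref{def2.13} to $u=\mathcal{T}^{-1}v$ and using $\mathcal{T}\mathcal{T}^{-1}v=v$ gives
$$
\|\mathcal{A}\mathcal{T}^{-1}v\|\le a\|\mathcal{T}^{-1}v\|+b\|v\|\le\bigl(a\|\mathcal{T}^{-1}\|+b\bigr)\|v\|,
$$
so that $\|\mathcal{A}\mathcal{T}^{-1}\|\le a\|\mathcal{T}^{-1}\|+b<1$ by hypothesis.

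Next, because $\|\mathcal{A}\mathcal{T}^{-1}\|<1$, the operator $I+\mathcal{A}\mathcal{T}^{-1}$ is invertible on $\mathcal{Y}$ with inverse given by the Neumann series $\sum_{k=0}^{\infty}(-\mathcal{A}\mathcal{T}^{-1})^{k}$, and
$$
\bigl\|(I+\mathcal{A}\mathcal{T}^{-1})^{-1}\bigr\|\le\frac{1}{1-\|\mathcal{A}\mathcal{T}^{-1}\|}\le\frac{1}{1-a\|\mathcal{T}^{-1}\|-b}.
$$
On $\mathrm{D}(\mathcal{T})=\mathrm{D}(\mathcal{S})$ one has $\mathcal{A}u=\mathcal{A}\mathcal{T}^{-1}(\mathcal{T}u)$, whence $\mathcal{S}u=(I+\mathcal{A}\mathcal{T}^{-1})\mathcal{T}u$; thus $\mathcal{S}$ is the composition of the bijection $\mathcal{T}\colon\mathrm{D}(\mathcal{T})\to\mathcal{Y}$ with the bounded bijection $I+\mathcal{A}\mathcal{T}^{-1}$ of $\mathcal{Y}$. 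Consequently $\mathcal{S}$ is a bijection of $\mathrm{D}(\mathcal{T})$ onto $\mathcal{Y}$ with
$$
\mathcal{S}^{-1}=\mathcal{T}^{-1}(I+\mathcal{A}\mathcal{T}^{-1})^{-1},
$$
and the desired estimate \eqref{eq:kato} follows at once by taking norms and inserting the two bounds above. Finally, $\mathcal{S}^{-1}$ is everywhere defined and bounded on $\mathcal{Y}$, hence closed, so its inverse $\mathcal{S}$ is closed as well.

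The argument is entirely soft, so I do not expect a serious analytic obstacle; the only point requiring care is the bookkeeping of domains that legitimizes the factorization $\mathcal{S}=(I+\mathcal{A}\mathcal{T}^{-1})\mathcal{T}$ together with the identity $\mathcal{A}u=\mathcal{A}\mathcal{T}^{-1}\mathcal{T}u$ on $\mathrm{D}(\mathcal{T})$, which relies crucially on the inclusion $\mathrm{D}(\mathcal{T})\subset\mathrm{D}(\mathcal{A})$ and on $\mathcal{T}^{-1}\mathcal{T}=\mathrm{id}$ on $\mathrm{D}(\mathcal{T})$. No estimate beyond the relative boundedness \eqref{def2.13} and the contraction condition $a\|\mathcal{T}^{-1}\|+b<1$ is needed; the closedness of $\mathcal{T}$ enters only to guarantee that $\mathcal{T}^{-1}$ is genuinely a bounded everywhere-defined inverse, which is already assumed.
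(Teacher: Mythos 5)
Your proof is correct. The paper gives no proof of this lemma at all --- it is quoted verbatim from Kato's book (Chapter Four, \S 1, Theorem 1.16 in \cite{Kato}) --- and your argument, the factorization $\mathcal{S}=(I+\mathcal{A}\mathcal{T}^{-1})\mathcal{T}$ together with the bound $\|\mathcal{A}\mathcal{T}^{-1}\|\leq a\|\mathcal{T}^{-1}\|+b<1$ and a Neumann series for $(I+\mathcal{A}\mathcal{T}^{-1})^{-1}$, is exactly the standard proof in that reference, with the domain bookkeeping and the closedness deduction (boundedness of the everywhere-defined $\mathcal{S}^{-1}$ implies $\mathcal{S}$ is closed) handled correctly.
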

	
	Then, we consider the 3D stationary linear generalized Ossen system:
	\begin{equation} \label{eq:the 3D stationary linear generalized degenerate Ossen system}
		\left\{\begin{array}{l}
			(-\Delta)^s v+\partial_{1} v+\nabla p=f, \\
			\nabla \cdot v=g.
		\end{array} \right.
	\end{equation} 
	For convenience, we define a function space $H^{s,k}(\mathbb{R}^3)$, $\frac12< s<1$, $1\leq k \leq \infty$, with norm $\|f\|_{H^{s,k}(\mathbb{R}^3)} := \|f\|_{L^k(\mathbb{R}^3)}   +  \|(-\Delta)^{s-1}\nabla f\|_{L^k(\mathbb{R}^3)}$ for any $f\in H^{s,k}(\mathbb{R}^3)$. It is easy to check that $H^{s,k}(\mathbb{R}^3)$ is a Banach space. By Theorem \ref{lem:lizorkin}, the following \(L^p\) estimates for \eqref{eq:the 3D stationary linear generalized degenerate Ossen system} could be obtained.
	\begin{Lem}[\(L^p\) estimates for the fractional linear Oseen system] \label{lem:3D stationary linear generalized degenerate Ossen system}
		Let $f\in L^k\left(\mathbb{R}^3\right)$, $g \in  H^{s,k}(\mathbb{R}^3)$ with $1<k<\frac{s+1}{s}$,$\frac12< s<1$ and $r=\left(\frac{1}{k}-\frac{s}{s+1}\right)^{-1}$. Then for the generalized Ossen system \eqref{eq:the 3D stationary linear generalized degenerate Ossen system}, there exists a unique $(v, p) \in\left(L^r\left(\mathbb{R}^3\right)\right)^3 \times\left(\dot{W}^{1, k}\left(\mathbb{R}^3\right) / \mathcal{P}_0\left(\mathbb{R}^3\right)\right)$ such that
		\begin{equation} \label{eq:temp1}
			\begin{aligned}
				 \|v\|_{L^r\left(\mathbb{R}^3\right)}+\|\nabla p\|_{L^k\left(\mathbb{R}^3\right)} 
				\leq C(k)\left(\left\|f\right\|_{L^k\left(\mathbb{R}^3\right)} +\|g\|_{H^{s,k}(\mathbb{R}^3)}\right) .
			\end{aligned}
		\end{equation}
	\end{Lem}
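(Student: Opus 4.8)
The plan is to solve \eqref{eq:the 3D stationary linear generalized degenerate Ossen system} explicitly in Fourier variables and then read off the mapping properties from Lizorkin's multiplier theorem (Theorem \ref{lem:lizorkin}). Writing $a(\xi):=|\xi|^{2s}+i\xi_1$ for the symbol of $(-\Delta)^s+\partial_1$, the transformed system reads $a(\xi)\widehat v+i\xi\,\widehat p=\widehat f$ and $i\xi\cdot\widehat v=\widehat g$. Applying $i\xi\cdot$ to the first equation and using the second eliminates $\widehat v$ and yields $\widehat p=\bigl(a(\xi)\widehat g-i\xi\cdot\widehat f\bigr)/|\xi|^2$; substituting back gives, for $j=1,2,3$,
\[
\widehat v_j=\frac{\widehat f_j}{a(\xi)}-\frac{\xi_j\,(\xi\cdot\widehat f)}{a(\xi)\,|\xi|^2}-\frac{i\xi_j\,\widehat g}{|\xi|^2},\qquad \widehat{\partial_j p}=\frac{\xi_j(\xi\cdot\widehat f)}{|\xi|^2}+i\xi_j|\xi|^{2s-2}\widehat g-\frac{\xi_j\xi_1}{|\xi|^2}\widehat g .
\]
Since $a(\xi)\neq0$ for $\xi\neq0$, these formulas define $(v,p)$ unambiguously and will provide both the existence and the estimate \eqref{eq:temp1}; I will treat the four groups of multipliers separately.

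The pressure is the easiest. In the expression for $\widehat{\partial_j p}$ the terms $\xi_j(\xi\cdot\widehat f)/|\xi|^2$ and $\xi_j\xi_1\widehat g/|\xi|^2$ are zero-order (double Riesz transform) symbols, hence bounded on $L^k$ with $\beta=0$ by Theorem \ref{lem:lizorkin}, contributing $\|f\|_{L^k}+\|g\|_{L^k}$; the remaining term $i\xi_j|\xi|^{2s-2}\widehat g$ is exactly the symbol of $(-\Delta)^{s-1}\partial_j g$, whose $L^k$ norm is part of $\|g\|_{H^{s,k}}$. Together these give $\|\nabla p\|_{L^k}\lesssim\|f\|_{L^k}+\|g\|_{H^{s,k}}$.

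For the velocity, the two $f$-multipliers $1/a(\xi)$ and $\xi_j\xi_l/(a(\xi)|\xi|^2)$ are where the gain of integrability originates, and verifying Lizorkin's hypothesis \eqref{eq:bound of phi} with the exponent $\beta=s/(s+1)$ is the technical core. The reason this particular $\beta$ is forced is a scaling balance between the two competing regimes of $a(\xi)$: using $|a(\xi)|\sim|\xi|^{2s}+|\xi_1|$, one splits $\{|\xi_1|\le|\xi|^{2s}\}$ and $\{|\xi_1|\ge|\xi|^{2s}\}$, and in either region the weight $|\xi_1\xi_2\xi_3|^{\beta}$ against $1/|a(\xi)|$ produces the exponent $2\beta(s+1)-2s$, which vanishes \emph{precisely} when $\beta=s/(s+1)$. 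Noting $|\xi_j\xi_l|/|\xi|^2\le1$, the second symbol obeys the same bound as $1/a(\xi)$; the mixed derivatives $\partial^{\kappa}$, $\kappa\in\{0,1\}^3$, are handled by the same two-regime bookkeeping, each $\partial_i$ lowering and each factor $|\xi_i|^{\kappa_i}$ restoring one order, so that every quantity in \eqref{eq:bound of phi} stays bounded. Theorem \ref{lem:lizorkin} then gives $\|v^{(f)}\|_{L^r}\lesssim\|f\|_{L^k}$ with $1/r=1/k-s/(s+1)$. I expect this verification, carried out uniformly over all $\kappa$ and both frequency regimes, to be the main obstacle.

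The $g$-contribution $v^{(g)}_j=-(-\Delta)^{-1}\partial_j g$ must instead be recovered by interpolation, since its symbol $\xi_j/|\xi|^2$ fails \eqref{eq:bound of phi} for $\beta=s/(s+1)$. The point is to exploit the two equivalent forms $v^{(g)}=-(-\Delta)^{-1}\nabla g$ and $v^{(g)}=-(-\Delta)^{-s}\bigl((-\Delta)^{s-1}\nabla g\bigr)$: Hardy--Littlewood--Sobolev applied to $I^{1}$ acting on $g\in L^k$ gives $v^{(g)}\in L^{r_1}$ with $1/r_1=1/k-1/3$, while $I^{2s}$ acting on $(-\Delta)^{s-1}\nabla g\in L^k$ gives $v^{(g)}\in L^{r_0}$ with $1/r_0=1/k-2s/3$. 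For $s>\tfrac12$ one checks $1/3\le s/(s+1)\le 2s/3$, i.e. $r_1\le r\le r_0$, whence $L^{r_1}\cap L^{r_0}\subset L^r$ and $\|v^{(g)}\|_{L^r}\lesssim\|g\|_{H^{s,k}}$. Finally, uniqueness follows by running the computation backwards: if $f=g=0$ then $a(\xi)\widehat v+i\xi\widehat p=0$ and $\xi\cdot\widehat v=0$ force $\widehat p=\widehat v=0$ for $\xi\neq0$, so $v$ and $\nabla p$ are supported at the origin, hence polynomials; the constraints $v\in L^r$ with $r<\infty$ and $\nabla p\in L^k$ (with $p$ taken modulo $\mathcal P_0$) then make both vanish, which completes the proof of \eqref{eq:temp1}.
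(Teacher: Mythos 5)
Your setup, the pressure bound, the treatment of the $f$-multipliers $1/a(\xi)$ and $\xi_j\xi_l/(a(\xi)|\xi|^2)$ via Theorem \ref{lem:lizorkin} with $\beta=s/(s+1)$, and the uniqueness argument all coincide with the paper's own proof, and your two-regime verification of \eqref{eq:bound of phi} (split along $|\xi_1|\lessgtr|\xi|^{2s}$, exponent $2\beta(s+1)-2s$) is correct. The genuine gap is in your treatment of the $g$-contribution to the velocity. By cancelling $a(\xi)$ you reduce it to $v^{(g)}=-(-\Delta)^{-1}\nabla g$ and then invoke Hardy--Littlewood--Sobolev at the two orders $1$ and $2s$. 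But the endpoint $\|I^{2s}\bigl((-\Delta)^{s-1}\nabla g\bigr)\|_{L^{r_0}}\lesssim\|(-\Delta)^{s-1}\nabla g\|_{L^k}$ with $1/r_0=1/k-2s/3$ requires $k<3/(2s)$, whereas the lemma allows all $1<k<\frac{s+1}{s}$; since $s>\tfrac12$ implies $3/(2s)<\frac{s+1}{s}$, the range $k\in\bigl[3/(2s),\frac{s+1}{s}\bigr)$ is nonempty (for $s=\tfrac34$ it contains $k=2$, where $1/k-2s/3=0$ and no HLS exponent exists). For such $k$ the second endpoint is vacuous, the interpolation $L^{r_1}\cap L^{r_0}\subset L^r$ has nothing to interpolate against, and your argument proves the lemma only for $k<3/(2s)$, not as stated. (Incidentally, for the single value $k=\frac{3(s+1)}{4s+1}$ actually used later in the paper your restricted version would suffice, but that is not the statement being proved.)

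The repair is already contained in your own multiplier work, and it is exactly what the paper does: do not cancel $a(\xi)$. Write $-\dfrac{i\xi_j\widehat g}{|\xi|^2}=-\dfrac{1}{a(\xi)}\bigl(|\xi|^{2s-2}i\xi_j\widehat g\bigr)+\dfrac{\xi_1\xi_j}{a(\xi)|\xi|^2}\,\widehat g$, i.e.\ apply the multiplier $1/a(\xi)$ to $\widehat{(-\Delta)^{s-1}\partial_j g}$ --- which lies in $L^k$ by hypothesis; this is precisely why the lemma assumes $g\in H^{s,k}$ rather than $g\in L^k$ --- and the multiplier $\xi_1\xi_j/(a(\xi)|\xi|^2)$ to $\widehat g$. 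For $\kappa=0$ the second multiplier obeys the same bound as $1/a(\xi)$ because $|\xi_1\xi_j|/|\xi|^2\le 1$, and the derivative cases follow from the same two-regime bookkeeping you describe for the $f$-multipliers. Then Theorem \ref{lem:lizorkin} with $\beta=s/(s+1)$ gives the $g$-part of \eqref{eq:temp1} for every admissible $k$, with no interpolation and no restriction beyond $k<\frac{s+1}{s}$.
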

	\begin{proof}
		By performing a Fourier transform on the system of \eqref{eq:the 3D stationary linear generalized degenerate Ossen system}, we obtain that
		\begin{equation} \label{eq:the 3D stationary linear generalized degenerate Ossen system after a Fourier transform}
			\left\{\begin{array}{l}
				(|\xi|^{2s} +i\xi_1)\hat{v} + i\xi \hat{p}=\hat{f}, \\
				i\xi\cdot \hat{v}=\hat{g},
			\end{array} \qquad \text { in } \mathbb{R}^3.\right.
		\end{equation}
		Taking $i\xi$ on the both sides of  $\eqref{eq:the 3D stationary linear generalized degenerate Ossen system after a Fourier transform}_1$ as the vector inner product, we get
		\begin{eqnarray*}
			(|\xi|^{2s}+i\xi_1)\hat{g} - |\xi|^2 \hat{p}=i\xi\cdot\hat{f},
		\end{eqnarray*}
		from which we solve
		\begin{eqnarray*}
			\hat{p} = \frac{|\xi|^{2s}+i\xi_1}{|\xi|^2}\hat{g} - \frac{i\xi\cdot\hat{f}}{|\xi|^2}
		\end{eqnarray*}
		and get that
		\begin{eqnarray} \label{eq:i xi p}
			i\xi\hat{p} = |\xi|^{2(s-1)}i\xi\hat{g}-\frac{\xi_1\xi}{|\xi|^2}\hat{g} + \frac{\xi\otimes\xi\cdot\hat{f}}{|\xi|^2}.
		\end{eqnarray}
		With $\hat{p}$ in hand, $\hat{v}$ could be solved from $\eqref{eq:the 3D stationary linear generalized degenerate Ossen system after a Fourier transform}_1$: 
		\begin{eqnarray*}
			\hat{v}\m&=&\m \frac{1}{|\xi|^{2s}+i\xi_1}\left(id-\frac{\xi\otimes\xi}{|\xi|^2}\right)\hat{f} 
			- \frac{1}{|\xi|^{2s}+i\xi_1}|\xi|^{2(s-1)}i\xi\hat{g} 
			 + \frac{1}{|\xi|^{2s}+i\xi_1}\frac{\xi_1\xi}{|\xi|^2}\hat{g} \no\\
			&=:&\m \Phi_1\hat{f} + \Phi_2 (|\xi|^{2(s-1)}i\xi\hat{g}) + \Phi_3 \hat{g}.
		\end{eqnarray*}
		By \eqref{eq:i xi p} and Theorem \ref{lem:lizorkin}, we easily infer an estimate of $ p$: 
		\begin{eqnarray} \label{eq:nabla p}
			\|\nabla p\|_{L^k (\mathbb{R}^3)} \leq C\left(\left\|f\right\|_{L^k\left(\mathbb{R}^3\right)}+\|g\|_{H^{s,k}(\mathbb{R}^3)}\right).
		\end{eqnarray}

		For the rest of the proof, it suffices to consider $\Phi_2$, since $\Phi_1$ and $\Phi_3$ are similar. Using Young inequality, there holds
		\begin{eqnarray*}
			|\xi_1|^\frac{s}{s+1} |\xi_2|^\frac{s}{s+1} |\xi_3|^\frac{s}{s+1} |\Phi_2| \leq C \frac{|\xi_1|+|\xi_2|^{2s}+|\xi_3|^{2s}}{\sqrt{|\xi|^{4s}+|\xi_1|^2}} \leq C
		\end{eqnarray*}
		for all $\xi \in \left\{\xi \in \mathbb{R}^3:\left|\xi_i\right|>0\right.,\left.i=1,2,3\right\}$, which proves \eqref{eq:bound of phi} when $\kappa_1=\kappa_2=\kappa_3=0$. The proof for non-zero $\left(\kappa_1, \kappa_2, \kappa_3\right)$ is similar. Therefore, we get
		\begin{equation*}
			\begin{aligned}
				\|v\|_{L^r\left(\mathbb{R}^3\right)} \leq C(k)\left(\left\|f\right\|_{L^k\left(\mathbb{R}^3\right)}+\|g\|_{H^{s,k}(\mathbb{R}^3)}\right),
			\end{aligned}
		\end{equation*}
		which combined \eqref{eq:nabla p} completes the proof.
	\end{proof}

	Next, we show that by Lemma \ref{lem2.5}, \(L^p\) estimates \eqref{eq:temp1} still hold for the following perturbation generalized Oseen system in $\mathbb{R}^3$:
	\begin{equation}
		\label{eq:the perturbation Oseen system:}
		\left\{\begin{array}{l}
			(-\Delta)^s v+\partial_{1} v+\mathcal{M}v+\nabla p=f, \\
			\nabla \cdot v=g.
		\end{array} \right. 
	\end{equation}
	\begin{Lem}[\(L^p\) estimates for the perturbation generalized Oseen system]
		\label{lem:WangYang}
		Assume that $1<k<\frac{s+1}{s}$, $f \in L^k\left(\mathbb{R}^3\right)$, $g \in  H^{s,k}(\mathbb{R}^3)$ and $\mathcal{M} \in$ $\left(L^{\frac{s+1}{s}}\left(\mathbb{R}^3\right)\right)^{3 \times 3}$. Let $(v, p)$ satisfy the perturbation generalized Oseen system \eqref{eq:the perturbation Oseen system:}. Then, there exists a small constant $\varepsilon >0$ such that if
		$$
		\|\mathcal{M}\|_{L^{\frac{s+1}{s}}\left(\mathbb{R}^3\right)}<\varepsilon,
		$$ there exists a unique solution $(v, p) \in\left(L^r\left(\mathbb{R}^3\right)\right)^3 \times\left(\dot{W}^{1, k}\left(\mathbb{R}^3\right) / \mathcal{P}_0\left(\mathbb{R}^3\right)\right)$  such that
		\begin{equation} \label{eq:esimate of v theta p}
				\begin{aligned}
					\|v\|_{L^r\left(\mathbb{R}^3\right)}+\|\nabla p\|_{L^k\left(\mathbb{R}^3\right)} 
					\leq C(k,\varepsilon)\left(\left\|f\right\|_{L^k\left(\mathbb{R}^3\right)}+\|g\|_{H^{s,k}(\mathbb{R}^3)}\right) .
				\end{aligned}
		\end{equation}
		where $r=\left(\frac{1}{k}-\frac{s}{s+1}\right)^{-1}$.
	\end{Lem}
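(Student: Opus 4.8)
The plan is to recast the perturbed system \eqref{eq:the perturbation Oseen system:} as an operator equation and to treat the zeroth-order term $\mathcal{M}v$ as a relatively bounded perturbation of the linear Oseen operator, so that Kato's stability theorem (Lemma \ref{lem2.5}) applies directly, feeding the already-established estimate of Lemma \ref{lem:3D stationary linear generalized degenerate Ossen system} as the ``unperturbed'' input. First I would introduce the Banach spaces $\mathcal{X}:=\left(L^r(\mathbb{R}^3)\right)^3\times\left(\dot{W}^{1,k}(\mathbb{R}^3)/\mathcal{P}_0(\mathbb{R}^3)\right)$ with $\|(v,p)\|_{\mathcal{X}}:=\|v\|_{L^r}+\|\nabla p\|_{L^k}$, and $\mathcal{Y}:=\left(L^k(\mathbb{R}^3)\right)^3\times H^{s,k}(\mathbb{R}^3)$ with $\|(f,g)\|_{\mathcal{Y}}:=\|f\|_{L^k}+\|g\|_{H^{s,k}}$, where $r=(\frac1k-\frac{s}{s+1})^{-1}$ as in the statement. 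Define the operators $\mathcal{T},\mathcal{A}:\mathcal{X}\to\mathcal{Y}$ by
\[
\mathcal{T}(v,p):=\left((-\Delta)^s v+\partial_{1}v+\nabla p,\ \nabla\cdot v\right),\qquad \mathcal{A}(v,p):=(\mathcal{M}v,\ 0),
\]
so that \eqref{eq:the perturbation Oseen system:} is exactly $\mathcal{S}(v,p)=(f,g)$ with $\mathcal{S}:=\mathcal{T}+\mathcal{A}$. By Lemma \ref{lem:3D stationary linear generalized degenerate Ossen system}, $\mathcal{T}$ has a bounded inverse $\mathcal{T}^{-1}\in\mathscr{B}(\mathcal{Y},\mathcal{X})$ with $\|\mathcal{T}^{-1}\|\le C(k)$; boundedness of $\mathcal{T}^{-1}$ in particular forces $\mathcal{T}$ to be closed, which is precisely the hypothesis on $\mathcal{T}$ required in Lemma \ref{lem2.5}.

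The next step is to verify that $\mathcal{A}$ is $\mathcal{T}$-bounded in the sense of \eqref{def2.13}. Since $\mathcal{A}$ is defined on all of $\mathcal{X}$, the inclusion $\mathrm{D}(\mathcal{T})\subset\mathrm{D}(\mathcal{A})$ is automatic. The exponents are arranged so that $\frac1k=\frac{s}{s+1}+\frac1r$, and $r\in(1,\infty)$ exactly because $1<k<\frac{s+1}{s}$; hence Hölder's inequality gives
\[
\|\mathcal{A}(v,p)\|_{\mathcal{Y}}=\|\mathcal{M}v\|_{L^k(\mathbb{R}^3)}\le \|\mathcal{M}\|_{L^{\frac{s+1}{s}}(\mathbb{R}^3)}\,\|v\|_{L^r(\mathbb{R}^3)}\le \varepsilon\,\|(v,p)\|_{\mathcal{X}}.
\]
Thus \eqref{def2.13} holds with $a=\varepsilon$ and $b=0$; note that no norm of $\mathcal{T}(v,p)$ is needed, which is what makes this relative bound so clean. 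I expect this Hölder step, together with the correct identification of $\mathcal{X}$ and $\mathcal{Y}$, to be the only genuine content of the argument: all of the harmonic analysis already sits inside Lemma \ref{lem:3D stationary linear generalized degenerate Ossen system}, and the main thing to be careful about is that the quotient norm $\|\nabla p\|_{L^k}$ and the composite data norm $\|g\|_{H^{s,k}}$ match those for which that lemma was stated, so that the abstract setup is faithful to \eqref{eq:temp1}.

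Finally, I would invoke Lemma \ref{lem2.5}. With $a=\varepsilon$, $b=0$ and $\|\mathcal{T}^{-1}\|\le C(k)$, the smallness hypothesis $a\|\mathcal{T}^{-1}\|+b<1$ reads $\varepsilon\,C(k)<1$, so it suffices to fix $\varepsilon<1/C(k)$. Then Lemma \ref{lem2.5} guarantees that $\mathcal{S}=\mathcal{T}+\mathcal{A}$ is closed and invertible, so \eqref{eq:the perturbation Oseen system:} admits a unique solution $(v,p)=\mathcal{S}^{-1}(f,g)\in\mathcal{X}$, and \eqref{eq:kato} yields $\|\mathcal{S}^{-1}\|\le \|\mathcal{T}^{-1}\|/(1-a\|\mathcal{T}^{-1}\|-b)$. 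Applying this to the data $(f,g)$ gives
\[
\|v\|_{L^r(\mathbb{R}^3)}+\|\nabla p\|_{L^k(\mathbb{R}^3)}\le \frac{C(k)}{1-\varepsilon\,C(k)}\left(\|f\|_{L^k(\mathbb{R}^3)}+\|g\|_{H^{s,k}(\mathbb{R}^3)}\right),
\]
which is exactly \eqref{eq:esimate of v theta p} with $C(k,\varepsilon)=C(k)/(1-\varepsilon\,C(k))$. The only loose end to confirm is that the solution produced abstractly by $\mathcal{S}^{-1}$ is the distributional solution of \eqref{eq:the perturbation Oseen system:}, which is immediate from the definition $\mathcal{S}=\mathcal{T}+\mathcal{A}$.
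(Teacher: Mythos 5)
Your proposal is correct and follows essentially the same route as the paper: the same spaces $\mathcal{X}$ and $\mathcal{Y}$, the same splitting into the linear Oseen operator plus the zeroth-order perturbation $(\mathcal{M}v,0)$, the same use of Lemma \ref{lem:3D stationary linear generalized degenerate Ossen system} for the inverse bound, and the same invocation of Kato's theorem (Lemma \ref{lem2.5}). The only cosmetic difference is that you verify the relative bound \eqref{def2.13} with $(a,b)=(\varepsilon,0)$, estimating $\|\mathcal{M}v\|_{L^k}$ directly by the $\mathcal{X}$-norm, whereas the paper takes $(a,b)=\left(0,\,C_1\|\mathcal{M}\|_{L^{\frac{s+1}{s}}}\right)$ by reinserting the inverse estimate; both choices satisfy Kato's smallness condition and yield \eqref{eq:esimate of v theta p}.
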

	\begin{proof}
		Denote the Banach spaces
		$$
		\begin{aligned}
			& \mathcal{X}:=\left(L^r\left(\mathbb{R}^3\right)\right)^3 \times \dot{W}^{1, k}\left(\mathbb{R}^3\right) / \mathcal{P}_0\left(\mathbb{R}^3\right), \\
			& \mathcal{Y}:=\left(L^k\left(\mathbb{R}^3\right)\right)^3 \times  H^{s,k}(\mathbb{R}^3) ,
		\end{aligned}
		$$
		with norms
		$$
		\begin{aligned}
			\|(v, p)\|_{\mathcal{X}} & :=\|v\|_{L^r\left(\mathbb{R}^3\right)}+\|\nabla p\|_{L^k\left(\mathbb{R}^3\right)}, \\
			\left\|\left(f,g\right)\right\|_{\mathcal{Y}} & :=\left\|f\right\|_{L^k\left(\mathbb{R}^3\right)} + \|g\|_{H^{s,k}\left(\mathbb{R}^3\right)},
		\end{aligned}
		$$
		respectively. Thanks to Lemma \ref{lem:3D stationary linear generalized degenerate Ossen system}, the operator $\mathcal{K}$ which is defined by
		$$
		\begin{aligned}
			\mathcal{K}:\qquad \mathcal{X} & \mapsto \mathcal{Y} \\
			\left(\begin{array}{l}
				v \\
				p
			\end{array}\right) & \mapsto\left(\begin{array}{l}(-\Delta)^s v+\partial_{1} v+\nabla p\\
				\nabla \cdot v\end{array}\right),
		\end{aligned}
		$$
		admits a bounded inverse operator $\mathcal{K}^{-1}$ and there exists a constant $C_1=C(k)$ such that
		$$
		\|v\|_{L^r\left(\mathbb{R}^3\right)}+\|\nabla p\|_{L^k\left(\mathbb{R}^3\right)} \leq C_1\|\mathcal{K}(v, p)\|_{\mathcal{Y}} .
		$$
		Note that the operator $\mathcal{H}: \mathcal{X} \mapsto \mathcal{Y}$, which is defined by
		$$
		\mathcal{H}(v, p)=\left(\mathcal{M}v, 0\right) ,
		$$
		satisfies
		$$
		\begin{aligned}
			\|\mathcal{H}(v, p)\| _{\mathcal{Y}} \leq \|\mathcal{M}\|_{L^\frac{s+1}{s}\left(\mathbb{R}^3\right)}\|v\|_{L^r\left(\mathbb{R}^3\right)}\leq C_1\|\mathcal{M}\|_{L^\frac{s+1}{s}\left(\mathbb{R}^3\right)}\|\mathcal{K}(v, p)\|_{\mathcal{Y}}.
		\end{aligned}
		$$
		Choosing $\varepsilon = (C_1+1)^{-1}$, it follows that
		$$
		C_1\|\mathcal{M}\|_{L^\frac{s+1}{s}\left(\mathbb{R}^3\right)}<1 .
		$$
		Then Lemma \ref{lem2.5} yields if $\|\mathcal{M}\|_{L^\frac{s+1}{s}\left(\mathbb{R}^3\right)} <\varepsilon$, the operator $\mathcal{K}+\mathcal{H}$ admits a bounded inverse. Thus, the estimate \eqref{eq:esimate of v theta p} holds automatically by the formula \eqref{eq:kato}. The proof is completed.
	\end{proof}
	At the end of this section, we introduce the fractional Gagliardo-Nirenberg inequality (see, e.g., Theorem 2.44 in \cite{BCD11}), which is used to prove Theorem \ref{thm:main1} and Corollary \ref{cor}.
	\begin{Lem}[fractional Gagliardo-Nirenberg inequality] \label{lem:FGN}
		Let $1<q,r\leq\infty$ and $0<\sigma<t<\infty$. There exists a constant $C$ such that
		$$
		\|u\|_{\dot{W}^{\sigma,p}(\mathbb{R}^3)} \leq C\|u\|_{L^q(\mathbb{R}^3)}^\theta\|u\|_{\dot{W}^{t,r}(\mathbb{R}^3)}^{1-\theta} \quad \text { with } \quad \frac{1}{p}=\frac{\theta}{q}+\frac{1-\theta}{r} \quad \text { and } \quad \theta=1-\frac{\sigma}{t}.
		$$
	\end{Lem}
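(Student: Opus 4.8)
The plan is to prove this entirely within Littlewood--Paley theory, reducing the inequality to a pointwise interpolation between two maximal quantities. Fix a homogeneous dyadic decomposition $u=\sum_{j\in\mathbb{Z}}\dot\Delta_j u$, where $\dot\Delta_j$ localizes frequencies to the annulus $|\xi|\sim 2^j$. Since the exponents $\sigma,t$ need not be integers, I read $\dot W^{\sigma,p}$ as the Riesz potential space, so that $\|u\|_{\dot W^{\sigma,p}(\mathbb{R}^3)}\approx\|(-\Delta)^{\sigma/2}u\|_{L^p}$ for $1<p<\infty$. For such $p$ one has the Triebel--Lizorkin characterization $\|u\|_{\dot W^{\sigma,p}(\mathbb{R}^3)}\approx \big\|\big(\sum_{j}4^{j\sigma}|\dot\Delta_j u|^2\big)^{1/2}\big\|_{L^p}$, so it suffices to bound the square function $S_\sigma u:=\big(\sum_{j}4^{j\sigma}|\dot\Delta_j u|^2\big)^{1/2}$ in $L^p$. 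The degenerate endpoint $p=\infty$ (which forces $q=r=\infty$) will be handled separately using the analogous supremum characterization of $\dot W^{\sigma,\infty}$.

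The heart of the argument is a pointwise estimate obtained by splitting the frequency sum at a threshold $J=J(x)$ that is allowed to depend on $x$. Introduce the two maximal quantities $m_0(x):=\sup_j|\dot\Delta_j u(x)|$ and $m_t(x):=\sup_j 2^{jt}|\dot\Delta_j u(x)|$. Bounding the low modes by $m_0$ and the high modes by $m_t$ and summing the resulting geometric series gives, for every $J$,
$$
S_\sigma u(x)^2\lesssim 4^{J\sigma}m_0(x)^2+4^{J(\sigma-t)}m_t(x)^2,
$$
where convergence of the two sums uses exactly $\sigma>0$ and $\sigma-t<0$. Optimizing the right-hand side over $J\in\mathbb{R}$ (the two terms balance at $4^{Jt}\sim (m_t/m_0)^2$) yields the pointwise bound $S_\sigma u(x)\lesssim m_0(x)^{\theta}m_t(x)^{1-\theta}$, using $\sigma/t=1-\theta$. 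Taking $L^p$ norms and applying H\"older's inequality with the exponents $\frac{q}{\theta p}$ and $\frac{r}{(1-\theta)p}$ --- which are conjugate precisely because $\frac1p=\frac{\theta}{q}+\frac{1-\theta}{r}$, and which exceed $1$ by the same relation --- converts this into $\|S_\sigma u\|_{L^p}\lesssim \|m_0\|_{L^q}^{\theta}\|m_t\|_{L^r}^{1-\theta}$.

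It then remains to control the two maximal functions by the stated norms. Since $q,r>1$, the Hardy--Littlewood maximal inequality gives $\|m_0\|_{L^q}\lesssim\|u\|_{L^q}$ from the standard pointwise bound $\sup_j|\dot\Delta_j u|\lesssim \mathscr{M}u$; and, writing $2^{jt}\dot\Delta_j u\approx \dot\Delta_j(-\Delta)^{t/2}u$ up to a fixed bounded Fourier multiplier, $\|m_t\|_{L^r}\lesssim\|(-\Delta)^{t/2}u\|_{L^r}\approx\|u\|_{\dot W^{t,r}}$. The cases $q=\infty$ or $r=\infty$ follow directly from $\|\dot\Delta_j f\|_{L^\infty}\lesssim\|f\|_{L^\infty}$. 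Combining the three displayed estimates proves the inequality.

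The main obstacle is not the interpolation step, which is elementary, but the harmonic-analysis bookkeeping at the endpoints: one must justify the square-function equivalence and the \emph{uniform-in-$j$} maximal control of the $t$-weighted supremum $m_t$, and treat the homogeneous spaces modulo polynomials together with the $\infty$ exponents, where the Hardy--Littlewood maximal function is unavailable and direct $L^\infty$ bounds must be substituted. A slicker alternative avoiding the square function is to invoke the complex interpolation identity $[L^q,\dot W^{t,r}]_{1-\theta}=\dot W^{\sigma,p}$ for Riesz potential spaces together with the fundamental inequality $\|u\|_{[X_0,X_1]_{\vartheta}}\le\|u\|_{X_0}^{1-\vartheta}\|u\|_{X_1}^{\vartheta}$; this reproduces the conclusion immediately, though it conceals the very same endpoint subtleties.
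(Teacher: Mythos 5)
The paper never proves this lemma at all --- it is quoted directly from Theorem 2.44 of \cite{BCD11} --- so the only meaningful comparison is with the standard Littlewood--Paley proof in that reference, and your argument is essentially that proof. For $1<p<\infty$ your execution is sound: the square-function characterization of $\dot W^{\sigma,p}$, the pointwise bound $S_\sigma u\lesssim m_0^{\theta}m_t^{1-\theta}$ obtained by splitting at an optimized threshold (using exactly $0<\sigma<t$), the conjugacy of $q/(\theta p)$ and $r/((1-\theta)p)$, and the controls $\|m_0\|_{L^q}\lesssim\|u\|_{L^q}$ and $\|m_t\|_{L^r}\lesssim\|(-\Delta)^{t/2}u\|_{L^r}$ (via the $j$-uniform rewriting $2^{jt}\dot\Delta_j=\tilde\Delta_j(-\Delta)^{t/2}$, then Hardy--Littlewood for finite exponents or Young for $L^\infty$) are all correct. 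The only stylistic difference from the cited source is that \cite{BCD11} works at the level of norms with Bernstein inequalities and the embedding $\dot B^{\sigma}_{p,1}\hookrightarrow\dot W^{\sigma,p}$, avoiding square functions entirely; your pointwise optimization is equally valid there.

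The genuine gap is the endpoint $p=q=r=\infty$, and it is not a throwaway case: it is precisely what the paper uses to prove Corollary \ref{cor}, namely $\|u\|_{\dot W^{1+2s,\infty}}\lesssim\|u\|_{L^\infty}^{\theta''}\|u\|_{\dot W^{3,\infty}}^{1-\theta''}$. You propose to handle it with ``the analogous supremum characterization of $\dot W^{\sigma,\infty}$,'' but under the Riesz-potential reading you yourself adopt ($\|u\|_{\dot W^{\sigma,p}}\approx\|(-\Delta)^{\sigma/2}u\|_{L^p}$) no such characterization exists: one only has the one-sided embedding $\|u\|_{\dot B^{\sigma}_{\infty,\infty}}=\sup_j 2^{j\sigma}\|\dot\Delta_j u\|_{L^\infty}\lesssim\|(-\Delta)^{\sigma/2}u\|_{L^\infty}$, which bounds the supremum quantity \emph{by} the norm you need to estimate --- the wrong direction for the left-hand side (the reverse inequality fails, as $\dot B^{\sigma}_{\infty,\infty}$ is strictly larger). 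The repair is to upgrade from $\ell^\infty_j$ to $\ell^1_j$: the same threshold splitting gives $\sum_j 2^{j\sigma}\|\dot\Delta_j u\|_{L^\infty}\lesssim 2^{J\sigma}\|u\|_{L^\infty}+2^{J(\sigma-t)}\sup_j 2^{jt}\|\dot\Delta_j u\|_{L^\infty}$, with both geometric series convergent since $0<\sigma<t$; optimizing over $J$ yields $\|u\|_{\dot B^{\sigma}_{\infty,1}}\lesssim\|u\|_{L^\infty}^{\theta}\|u\|_{\dot W^{t,\infty}}^{1-\theta}$, and Bernstein then gives $\|(-\Delta)^{\sigma/2}u\|_{L^\infty}\lesssim\|u\|_{\dot B^{\sigma}_{\infty,1}}$ modulo polynomials. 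Note that your fallback of complex interpolation does not rescue this case either, since interpolation with $L^\infty$ endpoints carries the same difficulty, as you concede. With the $\ell^1_j$ substitution your proof covers all stated exponents; as written, the one case Theorem \ref{thm:main1} and Corollary \ref{cor} actually rely on is unproved.
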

	
	\section{Proof of Theorem \ref{thm:main1} and Corollary \ref{cor}} \label{sec3}
	
	In this section, we prove Theorem \ref{thm:main1} and Corollary \ref{cor} under Theorem \ref{thm:main}.
	
	\begin{proof}[Proof of Theorem \ref{thm:main1}.]
		Using Lemma \ref{lem:FGN} and denoting $\theta':= \frac{2s}{s+1}$, we have
		\begin{equation*}\begin{aligned}
				\|\nabla u\|_{L^{1+\frac1s}(\mathbb{R}^3)} \leq C\|u\|_{\dot{H}^{s}(\mathbb{R}^3)}^{\theta'} \| u\|_{\dot{W}^{1+2s, \infty}(\mathbb{R}^3)}^{1-\theta'},
		\end{aligned}\end{equation*}
		for $\frac12<s<1$. Then, combining Theorem \ref{thm:main}, the proof is completed.
	\end{proof}
	
	\begin{proof}[Proof of Corollary \ref{cor}.]
		Thanks to Lemma \ref{lem:FGN}, denoting $\theta'':= \frac{2-2s}{3}$, we get
		\begin{equation*}\begin{aligned}
				\| u\|_{\dot{W}^{1+2s,\infty}(\mathbb{R}^3)}\leq C\|u\|_{L^\infty(\mathbb{R}^3)}^{\theta''} \| u\|_{\dot{W}^{3, \infty}(\mathbb{R}^3)}^{1-\theta''},
		\end{aligned}\end{equation*}
		for $\frac12<s<1$. Thus, by Theorem \ref{thm:main1}, the proof is complete.
	\end{proof}
	
	\section{Proof of Theorem \ref{thm:main}} \label{sec4}
	\begin{proof}[Proof of Theorem \ref{thm:main}.]
		We start the proof by the standard substitution of variables. Denoting $v:=u-(1,0,0)^T$, it follows from \eqref{eq:main} that 
		\begin{equation} 
			\label{eq:v}
			\left\{\begin{array}{l}
				(- \Delta)^s v+v\cdot\nabla v +\partial_{1} v+\nabla p=0, \\
				\nabla \cdot v=0,
			\end{array} \quad \quad \text { in } \mathbb{R}^3.\right.
		\end{equation}
		In the following, we focus on the generalized fractional Ossen system \eqref{eq:v} and divide the rest of proof into three steps.
		
		{\bf Step I: the estimate of pressure.} 
		By the condition \eqref{eq:D_condition} and Sobolev embedding theorem, we get
		\begin{equation} \label{eq:L6-estimate}
			\|v\|_{L^{\frac{3s+3}{2s-1}}(\mathbb{R}^3)}  \leq C\|\nabla v\|_{L^{\frac{s+1}{s}}(\mathbb{R}^3)} < \infty.
		\end{equation}
		Then using H${\rm \ddot{o}}$lder inequality and the continuity of the operator $\mathcal{R}_i$ on Lebesgue space $L^\ell\left(\mathbb{R}^3\right)$ for $2<\ell<+\infty$, it holds that 
		\begin{equation}
			\label{eq:vivj}
			\|\Delta^{-1}\partial_i \partial_j\left(v_i v_j\right)\|_{L^{\frac{3s+3}{4s-2}}(\mathbb{R}^3)} \leq C\|v_i v_j\|_{L^{\frac{3s+3}{4s-2}}(\mathbb{R}^3)} \leq C\|v\|^2_{L^{\frac{3s+3}{2s-1}}(\mathbb{R}^3)} < \infty,
		\end{equation}
		for $i,j=1,2,3$.
		Acting the divergence operator on $(\ref{eq:v})_1$ and combining \eqref{eq:vivj}, one has
		$$
		\begin{aligned}
			p =\sum_{i, j=1}^3-\Delta^{-1} \partial_i \partial_j\left(v_i v_j\right)\in L^{\frac{3s+3}{4s-2}}(\mathbb{R}^3).
		\end{aligned}
		$$
		
		{\bf Step II: $L^3$-estimate of $v$.} 
		According to the condition \eqref{eq:D_condition}, there exists an $M>0$ which is large enough such that
		\begin{eqnarray} \label{eq:M}
			\int_{B_M^c} |\nabla v|^{\frac{s+1}{s}}  ~d x<\varepsilon^{1+\frac1s},
		\end{eqnarray}
		where $B_M^c$ is the complement of $B_M$ in $\mathbb{R}^3$. Define $\psi \in C^{\infty}\left(\mathbb{R}^3\right)$ the cut-off function such that
		\begin{eqnarray} \label{eq:definition of psi}
			\psi(x)=\psi(|x|)=\left\{\begin{array}{lll}
				1, & \text { if } & |x|>2 M ; \\
				0, & \text { if } & |x|<M,
			\end{array}\right.
		\end{eqnarray}
		and $0 \leq \psi(x) \leq 1$ for any $ x \in B_{2 M}\setminus B_M$. Multiplying $\psi$ on the both sides of \eqref{eq:v}, it follows that
		\begin{equation}
			\label{eq:v'}
			\left\{\begin{array}{l}
				(- \Delta)^s (\psi v)+\psi v\cdot(\nabla v) \chi_{B_M^c} +\partial_{1} (\psi v)+\nabla (\psi p)=F(\psi), \\
				\nabla \cdot (\psi v)=\nabla\psi\cdot v,
			\end{array} \quad \quad \text { in } \mathbb{R}^3,\right.
		\end{equation}
		where
		$$
		\begin{aligned}
			& F(\psi)= (-\Delta)^s (\psi v) - \psi(-\Delta)^s v + v\partial_1 \psi + p\nabla\psi.
		\end{aligned}
		$$
		Firstly, we consider the commutators. Since $\frac12<s<1$, there holds 
			\begin{eqnarray} \label{eq: I_1+I_2}
				&&(-\Delta)^s\left[\psi(x)v(x)\right]-	\psi(x)(-\Delta)^s v(x) \no\\
				&=& (-\Delta)^{s-1/2} (-\Delta)^{1/2} \left[\psi(x)v(x)\right] - \psi(x) (-\Delta)^{s-1/2} \left[(-\Delta)^{1/2}v(x)\right] \no\\
				&=& (-\Delta)^{s-1/2} (-\Delta)^{1/2} \left[\psi(x)v(x)\right] - (-\Delta)^{s-1/2}\left[\left((-\Delta)^{1/2}v(x)\right)\psi(x)\right] \no\\
				&&+ (-\Delta)^{s-1/2}\left[\left((-\Delta)^{1/2}v(x)\right)\psi(x)\right] - \psi(x) (-\Delta)^{s-1/2} \left[(-\Delta)^{1/2}v(x)\right] \no\\
				&=:& I_1+I_2,
			\end{eqnarray}
			where $$I_1 := (-\Delta)^{s-1/2} (-\Delta)^{1/2} \left[\psi(x)v(x)\right] - (-\Delta)^{s-1/2}\left[\left((-\Delta)^{1/2}v(x)\right)\psi(x)\right] $$ and  $$I_2 := (-\Delta)^{s-1/2}\left[\left((-\Delta)^{1/2}v(x)\right)\psi(x)\right] - \psi(x) (-\Delta)^{s-1/2} \left[(-\Delta)^{1/2}v(x)\right].$$
		Denoting $k:=\frac{3(s+1)}{4s+1}$, using H\"{o}lder inequality and  \eqref{eq:CoifmanMcIntoshMeyer type commutator estimate}, we have
		\begin{eqnarray} \label{eq:I_2}
			\|I_2\|_{L^{k}(\mathbb{R}^3)} &\leq& C(s)\|(-\Delta)^{s-1/2}\psi(x)\|_{L^{3}(\mathbb{R}^3)}\|(-\Delta)^{1/2}v(x)\|_{L^{\frac{s+1}{s}}\left(\mathbb{R}^3\right)} \no\\
			&\leq& C(s,M) \|\nabla v\|_{L^{\frac{s+1}{s}}\left(\mathbb{R}^3\right)} < \infty.
		\end{eqnarray}
		For $I_1$, it follows from Hardy-Littlewood-Sobolev theorem of fractional integration that
		\begin{eqnarray} \label{eq:I_1}
			\|I_1\|_{L^{k}(\mathbb{R}^3)} 
			&\leq& \left\|(-\Delta)^{s-1}(-\Delta)^{1/2}\left[(-\Delta)^{1/2} \left(\psi(x)v(x)\right)-  \left((-\Delta)^{1/2}v(x)\right)\psi(x)\right]\right\|_{L^{k}(\mathbb{R}^3)} \no\\
			&\leq& C(s)\left\|(-\Delta)^{1/2}\left[(-\Delta)^{1/2} \left(\psi(x)v(x)\right)-  \left((-\Delta)^{1/2}v(x)\right)\psi(x)\right]\right\|_{L^{p}(\mathbb{R}^3)} \no\\
			&\leq& C(s)\left\|\nabla\left[(-\Delta)^{1/2} \left(\psi(x)v(x)\right)-  \left((-\Delta)^{1/2}v(x)\right)\psi(x)\right]\right\|_{L^{p}(\mathbb{R}^3)} \no\\
			&\leq& C(s) \left(\|I_4\|_{L^{p}(\mathbb{R}^3)}+\|I_5\|_{L^{p}(\mathbb{R}^3)}\right),
		\end{eqnarray}
		where 
		$$
		I_4 := (-\Delta)^{1/2}\left(\nabla \psi(x) \otimes v(x)\right) - (-\Delta)^{1/2} v(x) \otimes \nabla \psi(x),
		$$
		$$
		I_5 := (-\Delta)^{1/2}\left( \psi(x) \nabla v(x)\right) - \psi(x) (-\Delta)^{1/2} \nabla v(x)
		$$
		and
		\begin{equation}\begin{aligned} \label{eq:p}
				\frac{1}{k}=\frac{1}{p}-\frac{2-2s}{3}.
		\end{aligned}\end{equation}
		Using H\"{o}lder inequality and \eqref{eq:CoifmanMcIntoshMeyer type commutator estimate} again, one gets
		\begin{eqnarray} \label{eq:I4}
			\|I_4\|_{L^{p}(\mathbb{R}^3)} \leq C \|(-\Delta)^{1/2}\nabla\psi\|_{L^{m_1}(\mathbb{R}^3)} \|v\|_{L^{\frac{3s+3}{2s-1}} (\mathbb{R}^3)} \leq C(M) < \infty
		\end{eqnarray}
		and
		\begin{eqnarray} \label{eq:I5}
			\|I_5\|_{L^{p}(\mathbb{R}^3)} \leq C \|(-\Delta)^{1/2}\psi\|_{L^{m_2}(\mathbb{R}^3)} \|\nabla v\|_{L^{\frac{1+s}{s}} (\mathbb{R}^3)} \leq C(M) < \infty,
		\end{eqnarray}
		where
		\begin{equation}\begin{aligned} \label{eq:m1m2}
				\frac{1}{p}=\frac1m_1+\frac{2s-1}{3s+3},\quad \frac1p=\frac1m_2 + \frac{s}{s+1}.
		\end{aligned}\end{equation}
		Summing up \eqref{eq: I_1+I_2}, \eqref{eq:I_2}, \eqref{eq:I_1}, \eqref{eq:I4} and \eqref{eq:I5}, we get
		\begin{eqnarray*} 
			\left\|(-\Delta)^s\left[\psi(x)v(x)\right]-	\psi(x)(-\Delta)^s v(x)\right\|_{L^{k}(\mathbb{R}^3)} \leq C(M,s) < \infty.
		\end{eqnarray*}
		Then, by H\"{o}lder inequality, it holds
		\begin{eqnarray} \label{eq:F psi}
			\|F(\psi)\|_{L^{k}(\mathbb{R}^3)} \m&\leq&\m C\left(\|(-\Delta)^s\left[\psi(x)v(x)\right]-	\psi(x)(-\Delta)^s v(x) \|_{L^{k}(\mathbb{R}^3)} \right. \no\\ 
			&& \m\left. +\|\partial_{1} \psi\|_{L^{\frac32}(\mathbb{R}^3)}\|v\|_{L^{\frac{3s+3}{2s-1}}(\mathbb{R}^3)}  + \|\nabla\psi\|_{L^{s+1}(\mathbb{R}^3)}\|p\|_{L^{\frac{3s+3}{4s-2}}(\mathbb{R}^3)}\right) \no\\ 
			&\leq& \m C(M,s) < \infty. 
		\end{eqnarray}
		Similarly, 
		\begin{eqnarray} \label{eq:nabla psi v}
			\|\nabla \psi \cdot v\|_{L^k(\mathbb{R}^3)} \m&\leq&\m\|\nabla \psi\|_{L^{\frac{3}{2}}\left(\mathbb{R}^3\right)}\|v\|_{L^{\frac{3s+3}{2s-1}}\left(\mathbb{R}^3\right)} \leq  C(M)<\infty . 
		\end{eqnarray}
		By Hardy-Littlewood-Sobolev theorem of fractional integration and H\"{o}lder inequality, we have 
		\begin{eqnarray} \label{eq:nabla nabla psi v}
			\|(-\Delta)^{s-1}\nabla(\nabla \psi \cdot v)\|_{L^k(\mathbb{R}^3)} \m&\leq&\m C(s)\|\nabla(\nabla \psi \cdot v)\|_{L^p(\mathbb{R}^3)} \nonumber\\
			& \leq&\m C(s)\left(\|\nabla^2\psi\|_{L^{m_1}(\mathbb{R}^3)} \|v\|_{L^{\frac{3s+3}{2s-1}}(\mathbb{R}^3) } + \|\nabla\psi\|_{L^{m_2}(\mathbb{R}^3)}\|\nabla v\|_{L^{\frac{s+1}{s}}(\mathbb{R}^3)}\right)\no\\
			 &\leq& \m C(M,s) < \infty,
		\end{eqnarray}
		where the definitions of $p$, $m_1$ and $m_2$ can be found in \eqref{eq:p} and \eqref{eq:m1m2}.
		Combining \eqref{eq:M}, \eqref{eq:F psi}, \eqref{eq:nabla psi v} and \eqref{eq:nabla nabla psi v}, applying the Lemma \ref{lem:WangYang} to the system \eqref{eq:v'}, we obtain that
		$$
		\|\psi v\|_{L^3(\mathbb{R}^3)}<\infty.
		$$
		Therefore, it follows that $v \in L^3(\mathbb{R}^3)$ since $v$ is smooth.
		Then by the same process as Step I, it is not hard to find $p\in L^{\frac32}(\mathbb{R}^3)$.

		{\bf Step III: the vanishing of $v$.}  
		We introduce the cut-off function $\varphi \in C_c^{\infty}\left(\mathbb{R}^3\right)$ s.t.
		\begin{eqnarray*} 
			\varphi(x)=\varphi(|x|)=\left\{\begin{array}{lll}
				1, & \text { if } & |x|<1; \\
				0, & \text { if } & |x|>2,
			\end{array}\right.
		\end{eqnarray*}
		and $0 \leq \varphi(x) \leq 1$ for any $1 \leq|x| \leq 2$. 
		Then for each $R>1$, denote by
		\begin{eqnarray} \label{eq:definition of varphi}
			\varphi_R(x):=\varphi\left(\frac{|x|}{R}\right).
		\end{eqnarray}
		Multiplying the equation $\eqref{eq:v}_1$ by $ v\varphi_R$ and integrating over $\mathbb{R}^3$, there holds
		\begin{equation}\begin{aligned} \label{eq:Multiplyed equation}
				\int_{\mathbb{R}^3} (-\Delta)^{s} {v} \cdot\left(v\varphi_R \right)dx + \int_{\mathbb{R}^3} {v} \cdot {\nabla} {v} \cdot\left(v\varphi_R \right) dx + \int_{\mathbb{R}^3} \partial_1 v \cdot v\varphi_R dx + \int_{\mathbb{R}^3} {\nabla} p \cdot\left(v\varphi_R \right) d x=0.
		\end{aligned}\end{equation}
		For the first term in the left hand of \eqref{eq:Multiplyed equation}, using the equivalent properties of the operator $(-\Delta)^{s}$ (see, e.g., \cite{Kw2017}), we have
		\begin{equation}\begin{aligned} \label{eq:the first term in the left hand}
				&\int_{\mathbb{R}^3}(-\Delta)^{s} {v} \cdot\left(v\varphi_R \right) d x \\
				 =&\int_{\mathbb{R}^3}(-\Delta)^{s/2} {v} \cdot(-\Delta)^{s/2}\left(v\varphi_R \right) d x \\
				 =&\int_{\mathbb{R}^3}(-\Delta)^{s/2} v \cdot\left[(-\Delta)^{s/2}\left(v\varphi_R \right)  +    \varphi_R(-\Delta)^{s/2} v  -   \varphi_R(-\Delta)^{s/2} v\right] d x\\
				=& \int_{\mathbb{R}^3}\left| (-\Delta)^{s/2} v \right|^2\varphi_Rdx + \int_{\mathbb{R}^3} (-\Delta)^{s/2} v \cdot\left[(-\Delta)^{s/2}\left(v\varphi_R \right)   -   \varphi_R(-\Delta)^{s/2} v\right] d x.
		\end{aligned}\end{equation}
	    For the rest of the term in the left hand of \eqref{eq:Multiplyed equation}, by integration by parts, we get
	    \begin{equation}\begin{aligned} \label{eq:the rest of the term in the left hand}
	    		&\int_{\mathbb{R}^3} {v} \cdot {\nabla} {v} \cdot\left(v\varphi_R \right) dx + \int_{\mathbb{R}^3} \partial_1 v \cdot v\varphi_R dx + \int_{\mathbb{R}^3} {\nabla} p \cdot\left(v\varphi_R \right) d x\\
	    		=&-\frac12 \int_{\mathbb{R}^3} |v|^2 v\cdot\nabla\varphi_Rdx - \frac12\int_{\mathbb{R}^3} |v|^2 \partial_1 \varphi_Rdx - \int_{\mathbb{R}^3} pv\cdot\nabla\varphi_Rdx.
	    \end{aligned}\end{equation}
	    Combining \eqref{eq:Multiplyed equation}, \eqref{eq:the first term in the left hand} and \eqref{eq:the rest of the term in the left hand}, we obtain
	    \begin{equation}\begin{aligned}
	    		 \int_{\mathbb{R}^3}\left| (-\Delta)^{s/2} v \right|^2\varphi_Rdx =& \int_{\mathbb{R}^3} (-\Delta)^{s/2} v \cdot\left[ \varphi_R(-\Delta)^{s/2} v- (-\Delta)^{s/2}\left(v\varphi_R \right) \right] d x \\
	    		 &+\frac12 \int_{\mathbb{R}^3} |v|^2 v\cdot\nabla\varphi_Rdx + \frac12\int_{\mathbb{R}^3} |v|^2 \partial_1 \varphi_Rdx + \int_{\mathbb{R}^3} pv\cdot\nabla\varphi_Rdx\\
	    		 =:& I + II + III + IV.
	    \end{aligned}\end{equation}
        Next, we estimate the terms of $I-IV$. For $I$, by H\"{o}lder inequality and Lemma \ref{lem:Fractional Leibniz rule}, one gets
        \begin{equation}\begin{aligned} \label{eq:I}
        		I & =\int_{\mathbb{R}^3} (-\Delta)^{s/2} v \cdot\left[ \varphi_R(-\Delta)^{s/2} v- (-\Delta)^{s/2}\left(v\varphi_R \right) \right] d x\\
        		& \leq\left\|(-\Delta)^{s/2} v\right\|_{L^2(\mathbb{R}^3)}\left\|\varphi_R(-\Delta)^{s/2} v -(-\Delta)^{s/2}\left(v\varphi_R\right)\right\|_{L^2(\mathbb{R}^3)} \\
        		& \leq\|v\|_{\dot{H}^{s}(\mathbb{R}^3)}\left(\left\|(-\Delta)^{s/2}\left(v\varphi_R \right)-  \varphi_R(-\Delta)^{s/2} v -v(-\Delta)^{s/2} \varphi_R \right\|_{L^2(\mathbb{R}^3)}+\left\|v(-\Delta)^{s/2} \varphi_R\right\|_{L^2(\mathbb{R}^3)}\right) \\
        		& \leq\|v\|_{\dot{H}^{s}(\mathbb{R}^3)}\left( \left\|(-\Delta)^{s(1-\theta)/2}\varphi_R\right\|_{L^{\frac{6}{1-\theta}}(\mathbb{R}^3)}   \left\|(-\Delta)^{{s}\theta/2}v\right\|_{L^{\frac{6}{\theta+2}}(\mathbb{R}^3)}   +   \left\|v(-\Delta)^{s/2} \varphi_R\right\|_{L^2(\mathbb{R}^3)}\right) ,
        \end{aligned}\end{equation}
        where $0<\theta<1$.
        Then by the H\"{o}lder inequality, it holds
        \begin{equation}\begin{aligned} \label{eq:temp11}
        		\left\|v(-\Delta)^{s/2} \varphi_R\right\|_{L^2(\mathbb{R}^3)} \leq \|v\|_{L^3(\mathbb{R}^3)}\left\|(-\Delta)^{s/2} \varphi_R\right\|_{L^6(\mathbb{R}^3)}\leq CR^{\frac12-s}\|v\|_{L^3(\mathbb{R}^3)}.
        \end{aligned}\end{equation}
        Thanks to the interpolation theorem(see Theorem 6.4.5 in \cite{BL76} for example) with $s\theta = s\theta + 0\cdot(1-\theta)$ and $\frac{\theta+2}{6} = \frac{\theta}{2} + \frac{1-\theta}{3}$, we have
        \begin{equation}\begin{aligned}
        		\left\|(-\Delta)^{{s}\theta/2}v\right\|_{L^{\frac{6}{\theta+2}}(\mathbb{R}^3)} = \|v\|_{\dot{W}^{s\theta, {\frac{6}{\theta+2}}}(\mathbb{R}^3)} \leq C\|v\|_{\dot{H}^{s}(\mathbb{R}^3)}^\theta \|v\|_{L^3(\mathbb{R}^3)}^{1-\theta},
        \end{aligned}\end{equation}
        and then
        \begin{equation}\begin{aligned} \label{eq:temp12}
        		\left\|(-\Delta)^{s(1-\theta)/2}\varphi_R\right\|_{L^{\frac{6}{1-\theta}}(\mathbb{R}^3)}   \left\|(-\Delta)^{{s}\theta/2}v\right\|_{L^{\frac{6}{\theta+2}}(\mathbb{R}^3)} \leq CR^{-(s-\frac12)(1-\theta)}\|v\|_{\dot{H}^{s}(\mathbb{R}^3)}^\theta \|v\|_{L^3(\mathbb{R}^3)}^{1-\theta}.
        \end{aligned}\end{equation}
        Combining \eqref{eq:I}, \eqref{eq:temp11} and \eqref{eq:temp12}, we obtain
        \begin{equation*}\begin{aligned}
        		I\leq CR^{-(s-\frac12)(1-\theta)}\|v\|_{\dot{H}^{s}(\mathbb{R}^3)}^{1+\theta} \|v\|_{L^3(\mathbb{R}^3)}^{1-\theta} + CR^{\frac12-s}\|v\|_{L^3(\mathbb{R}^3)}\|v\|_{\dot{H}^{s}(\mathbb{R}^3)}.
        \end{aligned}\end{equation*}
        For $II$, $III$ and $IV$, using H\"{o}lder inequality, it holds
        \begin{eqnarray*}
        II=\frac12	\int_{\mathbb{R}^3} \left(v\cdot\nabla\varphi_R\right)|v|^2 dx &\leq& C\|\nabla\varphi_R\|_{L^\infty(\mathbb{R}^3)} \int_{B_{2 R} \backslash B_R} |v|^3 dx \leq CR^{-1}\|v\|^3_{L^3(B_{2 R} \backslash B_R)}, \no\\
       III=\frac12 \int_{\mathbb{R}^3} |v|^2\partial_1\varphi_R~ dx 
        &\leq& C\|\nabla\varphi_R\|_{L^\infty(\mathbb{R}^3)} \left(\int_{B_{2 R} \backslash B_R} |v|^3 dx\right)^{2/3}\left(\int_{B_{2 R} \backslash B_R} ~ dx\right)^{1/3} \no\\
        &\leq& C\|v\|^2_{L^3(B_{2 R} \backslash B_R)},\no\\
        IV=	\int_{\mathbb{R}^3} \left(v\cdot\nabla\varphi_R\right)p~ dx &\leq& \|\nabla\varphi_R\|_{L^\infty(\mathbb{R}^3)} \left(\int_{B_{2 R} \backslash B_R} |v|^3 dx\right)^{1/3}\left(\int_{B_{2 R} \backslash B_R} |p|^{\frac32} dx\right)^{2/3} \no\\
        	&\leq& CR^{-1}\|v\|_{L^3(B_{2 R} \backslash B_R)}\|p\|_{L^{\frac32}(B_{2 R} \backslash B_R)}.
        \end{eqnarray*}
        Due to $v\in L^3(\mathbb{R}^3)\cap\dot{H}^{s}(\mathbb{R}^3)$ and $p\in L^{\frac32}(\mathbb{R}^3)$, letting $R\rightarrow\infty$, we have
        \begin{equation*}\begin{aligned}
        		\lim_{R \rightarrow \infty} I+II+III+IV = 0,
        \end{aligned}\end{equation*}
        which implies
        \begin{equation*}\begin{aligned}
        		\int_{\mathbb{R}^3}\left| (-\Delta)^{s/2} v \right|^2dx = 0.
        \end{aligned}\end{equation*}
       Then by Sobolev embedding theorem, there holds 
       \begin{equation*}\begin{aligned}
       		\|v\|_{L^{\frac{6}{3-2s}}(\mathbb{R}^3)} \leq C\|v\|_{\dot{H}^{s}(\mathbb{R}^3)} = C\left(\int_{\mathbb{R}^3}\left| (-\Delta)^{s/2} v \right|^2dx\right)^{\frac12} = 0,
       \end{aligned}\end{equation*}
       from which we deduce that $v\equiv0$ a.e. in $\mathbb{R}^3$. The proof is completed.
	   \end{proof}
	   \begin{Rem}
	   	In fact, with \( v \in L^3(\mathbb{R}^3) \), the vanishing of \( v \) could also be proved by using the Caffarelli-Silvestre extension(see, for example, Yang's approach in \cite{Yang2022}). 
	   \end{Rem}

	   \noindent {\bf Acknowledgments.}
	   W. Wang was supported by National Key R\&D Program of China (No. 2023YFA1009200), NSFC under grants 12071054 and 12471219.\\
	   
	   \noindent {\bf Declaration of competing interest.}
	   The authors state that there is no conflict of interest.\\
	   
	   \noindent {\bf Data availability.}
	   No data was used in this paper.


\begin{thebibliography}{99}
	\bibitem{AVMRTM10} 
	F.~Andreu-Vaillo, J.~M. Mazón, J.~D. Rossi, and J.~J. Toledo-Melero, 
	\emph{Nonlocal diffusion problems}, 
	Math. Surv. Monogr. \textbf{165}, Amer. Math. Soc., Providence, RI, 2010.
	
	\bibitem{BCD11} 
	H.~Bahouri, J.-Y.~Chemin, and R.~Danchin, 
	\emph{Fourier analysis and nonlinear partial differential equations}, 
	Grundlehren Math. Wiss. \textbf{343}, Springer, Heidelberg, 2011.
	
	\bibitem{BL76} 
	J.~Bergh and J.~Löfström, 
	\emph{Interpolation spaces. An introduction}, 
	Grundlehren Math. Wiss. \textbf{223}, Springer-Verlag, Berlin, 1976.
	
	\bibitem{BFZ2013} 
	M.~Bildhauer, M.~Fuchs, and G.~Zhang, 
	\emph{Liouville-type theorems for steady flows of degenerate power law fluids in the plane}, 
	J. Math. Fluid Mech. \textbf{15} (2013), no.~3, 583--616.
	
	\bibitem{CS07} 
	L.~Caffarelli and L.~Silvestre, 
	\emph{An extension problem related to the fractional Laplacian}, 
	Comm. Partial Differ. Equ. \textbf{32} (2007), no.~7--9, 1245--1260.
	
	\bibitem{Cal65} 
	A.-P.~Calderón, 
	\emph{Commutators of singular integral operators}, 
	Proc. Nat. Acad. Sci. U.S.A. \textbf{53} (1965), 1092--1099.
	
	
	
	\bibitem{Chae2014} 
	D.~Chae, 
	\emph{Liouville-type theorem for the forced Euler equations and the Navier-Stokes equations}, 
	Commun. Math. Phys. \textbf{326} (2014), 37--48.
	
	\bibitem{Chae2021} 
	D.~Chae, 
	\emph{Relative decay conditions on Liouville type theorem for the steady Navier-Stokes system}, 
	J. Math. Fluid Mech. \textbf{23} (2021), no.~1, Paper No.~21, 6~pp.
	
	
	\bibitem{CW2016} 
	D.~Chae and J.~Wolf, 
	\emph{On Liouville type theorems for the steady Navier-Stokes equations in $\mathbb{R}^3$}, 
	J. Differ. Equ. \textbf{261} (2016), no.~10, 5541--5560.
	
	\bibitem{CW2019} 
	D.~Chae and J.~Wolf, 
	\emph{On Liouville type theorem for the stationary Navier-Stokes equations}, 
	Calc. Var. Partial Differ. Equ. \textbf{58} (2019), no.~3, Paper No.~111, 11~pp.
	
	
	
	
	
	
	
	\bibitem{CP2025} 
	D.~Chamorro and B.~Poggi, 
	\emph{On an almost sharp Liouville type theorem for fractional Navier-Stokes equations}, Publ. Mat. \textbf{69} (2025), no.~1, 27--43.
	
	\bibitem{CH21} 
	M.~Colombo and S.~Haffter, 
	\emph{Estimate on the dimension of the singular set of the supercritical surface quasigeostrophic equation}, 
	Ann. PDE \textbf{7} (2021), no.~1, Paper No.~6, 62~pp.
	
	\bibitem{CdLM20} 
	M.~Colombo, C.~de Lellis, and A.~Massaccesi, 
	\emph{The generalized Caffarelli-Kohn-Nirenberg theorem for the hyperdissipative Navier-Stokes system}, 
	Comm. Pure Appl. Math. \textbf{73} (2020), no.~3, 609--663.
	
	\bibitem{Fin} 
	R.~Finn, 
	\emph{On the steady-state solutions of the Navier-Stokes equations. III}, 
	Acta Math. \textbf{105} (1961), 197--244.
	
	\bibitem{Galdi} 
	G.~P.~Galdi, 
	\emph{An introduction to the mathematical theory of the Navier-Stokes equations. Steady-state problems}, 
	2nd ed., Springer, New York, 2011.
	
	\bibitem{GW1978} 
	D.~Gilbarg and H.~F.~Weinberger, 
	\emph{Asymptotic properties of steady plane solutions of the Navier-Stokes equations with bounded Dirichlet integral}, 
	Ann. Sc. Norm. Super. Pisa Cl. Sci. \textbf{5} (1978), no.~2, 381--404.
	
	\bibitem{GO14} 
	L.~Grafakos and S.~Oh, 
	\emph{The Kato-Ponce inequality}, 
	Comm. Partial Differ. Equ. \textbf{39} (2014), no.~6, 1128--1157.
	
	
	
	\bibitem{JV2024}
	O.~Jarrin and G.~Vergara-Hermosilla, 
	\emph{An~$L^p$-theory for fractional stationary Navier-Stokes equations},
	J. Elliptic Parabol. Equ. \textbf{10} (2024), no.~2, 859--898.
	
	
	\bibitem{Kato} 
	T.~Kato, 
	\emph{Perturbation theory for linear operators}, 
	Reprint of the 1980 ed., Classics Math., Springer-Verlag, Berlin, 1995.
	
	\bibitem{KNSS} 
	G.~Koch, N.~Nadirashvili, G.~Seregin, and V.~Sverak, 
	\emph{Liouville theorems for the Navier-Stokes equations and applications}, 
	Acta Math. \textbf{203} (2009), 83--105.
	
	\bibitem{KTW2021} 
	H.~Kozono, Y.~Terasawa, and Y.~Wakasugi, 
	\emph{Asymptotic behavior of solutions to elliptic and parabolic equations with unbounded coefficients of the second order in unbounded domains}, 
	Math. Ann. \textbf{380} (2021), no.~3--4, 1105--1117.
	
	
	\bibitem{Kw2017} 
	M.~Kwaśnicki,
	\emph{Ten equivalent definitions of the fractional Laplace operator},
	Fract. Calc. Appl. Anal. \textbf{20} (2017), no.~1, 7--51.
	
	
	\bibitem{Lady} 
	O.~A.~Ladyzhenskaya, 
	\emph{The mathematical theory of viscous incompressible flow}, 
	Gordon \& Breach, New York, 1969.
	
	
	\bibitem{LS20} 
	E.~Lenzmann and A.~Schikorra, 
	\emph{Sharp commutator estimates via harmonic extensions}, 
	Nonlinear Anal. \textbf{193} (2020), 111375, 37~pp.
	
	\bibitem{L1933} 
	J.~Leray, 
	\emph{Étude de diverses équations intégrales non linéaires et de quelques problèmes que pose l’hydrodynamique}, 
	J. Math. Pures Appl. \textbf{12} (1933), 1--82. 
	
	
	\bibitem{Li-Tam} 
	P.~Li and L.-F.~Tam, 
	\emph{Linear growth harmonic functions on a complete manifold}, 
	J. Differ. Geom. \textbf{29} (1989), 421--425.
	
	\bibitem{LP} 
	Z.~Li and X.~Pan, 
	\emph{Liouville theorem of the 3D stationary MHD system: for D-solutions converging to non-zero constant vectors}, 
	NoDEA Nonlinear Differ. Equ. Appl. \textbf{28} (2021), no.~2, Paper No.~12, 14~pp.
	
	\bibitem{LZ2025}
	Q.~Liu and Z.~Zuo,
	\emph{Partially regular weak solutions and Liouville-type theorem to the stationary fractional Navier-Stokes equations in dimensions four and five}, J. Differential Equations \textbf{421} (2025), 291--335.
	
	\bibitem{Liz1963} 
	P.~I.~Lizorkin, 
	\emph{$(L^p, L^q)$-multipliers of Fourier integrals}, 
	Dokl. Akad. Nauk SSSR \textbf{152} (1963), 808--811.
	
	\bibitem{Liz1967} 
	P.~I.~Lizorkin, 
	\emph{Multipliers of Fourier integrals in the spaces $L^{p,\theta}$}, 
	Tr. Mat. Inst. Steklova \textbf{89} (1967), 231--248.
	
	
	
	\bibitem{MDB1999}
	M.~M.~Meerschaert, A.~B.~David and B.~Boris,
	\emph{Multidimensional advection and fractional dispersion},
	Physical Review E \textbf{59} (1999), no.~5, 5026.
	
	\bibitem{MDB2001}
	M.~M.~Meerschaert, A.~B.~David and B.~Boris,
	\emph{Operator Lévy motion and multiscaling anomalous diffusion},
    Physical Review E \textbf{63} (2001), no.~2, 021112.
    
    
    
	\bibitem{NT2019} 
	V.~Naibo and A.~Thomson, 
	\emph{Coifman-Meyer multipliers: Leibniz-type rules and applications to scattering of solutions to PDEs}, 
	Trans. Amer. Math. Soc. \textbf{372} (2019), no.~8, 5453--5481.
	
	\bibitem{P2000} 
	S.~B.~Pope, 
	\emph{Turbulent Flows}, 
	Cambridge University Press, Cambridge, 2000. 
	
	\bibitem{ST1994}  
	G.~Samorodnitsky and M.~Taqqu,
	\emph{Stable Non-Gaussian Random Processes: Stochastic Models with Infinite Variance},
	Chapman and Hall, London, 1994. 
	
	\bibitem{Se2016} 
	G.~Seregin, 
	\emph{Liouville type theorem for stationary Navier-Stokes equations}, 
	Nonlinearity \textbf{29} (2016), no.~8, 2191--2195.
	
	\bibitem{SW2019} 
	G.~Seregin and W.~Wang, 
	\emph{Sufficient conditions on Liouville type theorems for the 3D steady Navier-Stokes equations}, 
	Algebra i Analiz \textbf{31} (2019), no.~2, 269--278; English transl., St. Petersburg Math. J. \textbf{31} (2020), 493--500.
	
	\bibitem{Tan2025} 
	W.~Tan, 
	\emph{New Liouville type theorems for the stationary Navier-Stokes equations}, 
	arXiv:2501.00001, 2025.
	
	\bibitem{TY2016} 
	L.~Tang and Y.~Yu,
	\emph{Partial H\"{o}lder regularity of the steady fractional Navier-Stokes equations},
	Calc. Var. Partial Differential Equations \textbf{55} (2016), no.~2, Art.~31, 18~pp.
	
	\bibitem{Tai2018} 
	T.-P.~Tsai, 
	\emph{Lectures on Navier-Stokes equations}, 
	Grad. Stud. Math. \textbf{192}, Amer. Math. Soc., Providence, RI, 2018.
	
	
	\bibitem{WW2023} 
	W.~Wang and J.~Wu, 
	\emph{Classification of solutions of the 2D steady Navier-Stokes equations with separated variables in cone-like domains}, 
	Nonlinearity \textbf{36} (2023), no.~5, 2839--2866.
	
	\bibitem{WY2024} 
	W.~Wang and G.~Yang, 
	\emph{Liouville type theorems for the 3D stationary MHD and Hall-MHD equations with non-zero constant vectors at infinity}, 
	arXiv:2404.18051, 2024.
	
	\bibitem{WX2018} 
	Y.~Wang and J.~Xiao, 
	\emph{A Liouville problem for the stationary fractional Navier-Stokes-Poisson system}, 
	J. Math. Fluid Mech. \textbf{20} (2018), no.~2, 485--498.
	
	\bibitem{Yang2022} 
	J.~Yang, 
	\emph{On Liouville type theorem for the steady fractional Navier-Stokes equations in $\mathbb{R}^3$}, 
	J. Math. Fluid Mech. \textbf{24} (2022), no.~3, Paper No.~81, 6~pp.
	
	\bibitem{Yau} 
	S.-T.~Yau, 
	\emph{Harmonic functions on a complete Riemannian manifold}, 
	Comm. Pure Appl. Math. \textbf{28} (1975), 201--228.


\end{thebibliography}
\end{document}